
\documentclass{ws-join}

\usepackage{amsmath,amssymb}
\usepackage{graphicx}
\usepackage{latexsym}
\usepackage{wasysym}
\usepackage{color}
\newtheorem{observation} {Remark}[section]
\newcommand{\PartIntSup}[1]{\left\lceil #1\right\rceil}
\newcommand{\PartIntInf}[1]{\left\lfloor #1\right\rfloor}
\begin{document}

\markboth{Huc, Sau, \v{Z}erovnik}{$(\ell,k)$-Routing on
Plane Grids}

\title{$(\ell,k)$-Routing on Plane Grids}
\date{    } 

\author{Florian Huc}
\address{Centre Universitaire d'Informatique, Battelle b\^atiment A,\\ route de Drize 7, 1227 Carouge, Geneva, Switzerland\\
\email{florian.huc@unige.ch}}

\author{Ignasi Sau}
\address{Mascotte Project, CNRS/INRIA/UNSA/I3S\\
2004 route des Lucioles\\
B.P. 93 F-06902 Sophia-Antipolis Cedex, France\\
\&\\
Graph Theory and Combinatorics group,\\
MA4, UPC, Barcelona, Spain\\
\email{Ignasi.Sau@sophia.inria.fr}}


\author{Janez \v{Z}erovnik}
\address{Institute of Mathematics, Physics and Mechanics (IMFM),\\ Jadranska 19, Ljubljana\\
\&\\
FME, University of Ljubljana,\\ A\v{s}ker\v{c}eva 6
, Ljubljana, Slovenia\\
\email{janez.zerovnik@imfm.uni-lj.si}
}

\maketitle

\begin{abstract}
The packet routing problem plays an essential role in communication networks. It involves how to transfer data from some origins to some destinations within a reasonable amount of time. In the $(\ell,k)$-routing problem, each node can send at most $\ell$ packets and receive at most $k$ packets. Permutation routing is the particular case $\ell=k=1$. In the $r$-central routing problem, all nodes at distance at most $r$ from a fixed node $v$ want to send a packet to $v$.

In this article we study the permutation routing, the $r$-central routing and the general $(\ell,k)$-routing problems on plane grids, that is square grids, triangular grids and hexagonal grids. We use the \emph{store-and-forward} $\Delta$-port model, and we consider both full and half-duplex networks. We first survey the existing results in the literature about packet routing, with special emphasis on $(\ell,k)$-routing on plane grids. Our main contributions are the following:

 1. Tight permutation routing algorithms on full-duplex hexagonal grids, and
half duplex triangular and hexagonal grids.

2. Tight $r$-central routing algorithms on  triangular and hexagonal grids.

3. Tight $(k,k)$-routing algorithms on square, triangular and hexagonal grids.

 4. Good approximation algorithms (in terms of running time) for $(\ell,k)$-routing on square, triangular and hexagonal grids, together with new lower bounds on the running time of any algorithm using shortest path routing.

 These algorithms are all completely distributed, i.e., can be implemented independently at each node. Finally, we also formulate the $(\ell,k)$-routing problem as a \textsc{Weighted Edge Coloring} problem on bipartite graphs.\\
\end{abstract}

\keywords{Packet routing, distributed algorithm,
$(\ell,k)$-routing, plane grids, permutation routing, shortest path, oblivious algorithm.
}

\section{Introduction}
\label{sec:intro}

In telecommunication networks, it is essential to be able to route
communications as quickly as possible. In this context, the
\emph{packet routing} problem plays a capital role. In this
problem we are given a network and a set of packets to be routed
through the nodes and the edges of the network graph. A packet is
characterized by its origin and its destination. We suppose that
an edge can be used by no more than one packet at the same time.
The objective is to find an algorithm to compute a schedule to
route all packets while minimizing the total delivery time. This
problem has been widely studied in the literature under many
different assumptions. In 1988, in their seminal article
\citelow{LMR88,Leighton_C_D_94},  Leighton, Maggs and Rao proved
the existence of a schedule for routing any set of packets with
edge-simple paths on a general network, in optimal time of
$\mathcal{O}(C+D)$ steps. Here $C$ is the congestion (maximum
number of paths sharing an edge) and $D$ the dilation (length of
the longest path) and it is assumed that the paths are given a
priori. The proof of \cite{Leighton_C_D_94} used Lov\'asz Local
Lemma and was non constructive. This result was further improved
in \cite{Leighton_fast_95} where the same authors gave an explicit
algorithm, using the Beck's constructive version of the Local
Lemma.

These algorithms to compute the optimal schedule are centralized.
Then in \cite{Universal97} Ostrovsky and Rabani gave a distributed
randomized algorithm running in $\mathcal{O}(C+D +
\log^{1+\epsilon}(n))$ steps (see Section \ref{sec:general} for
more references).

Although these results are asymptotically tight, they deal with a general
network, and in many cases it is possible to design more efficient
algorithms by looking at specific packet configurations or network
topologies. For instance, is it natural to bound the maximum number of
messages that a node can send or receive. We focus on this point in
Section \ref{sec:problem}, where we will formally define the problem studied in this paper.

\noindent On the other hand, the network considered  plays a major role on the
quality and the simplicity of the solution. For example, in a radio
wireless environment, cellular networks are usually modeled by a
\emph{hexagonal grid} where nodes represent base stations. The cells of
the hexagonal grids have good diameter to area ratio and still have a
simple structure. If centers of neighboring cells are connected, the
resulting graph is called a \emph{triangular grid}. Notice that hexagonal
grids are subgraphs of the triangular grid. We will talk about such
networks in Section \ref{sec:previous}. In this paper we focus on the
study of the $(\ell,k)$-routing problem in convex subgraphs, i.e., subgraphs of the square,
triangular and hexagonal grids which contain all shortest paths between all pairs of nodes.


\subsection{General Results on Packet Routing}
\label{sec:general} In this section we provide a fast overview of the
state-of-the-art of the general packet routing problem, in both the
off-line and on-line settings in Sections \ref{sec:off} and \ref{sec:on}
respectively, focusing mostly on the later. We begin by recalling three
classical lower bounds for the packet routing problem.

\subsubsection{Classical lower bounds}
\label{sec:bounds}  In the packet routing problem, there are three
classical types of lower bounds for the running time of any algorithm:

\begin{itemize}

\item[1.] \textbf{Distance bound}: the longest distance over the paths of all
packets (usually called \emph{dilation} and denoted by $D$) constitutes a lower
bound on the number of steps required to route all the packets.

\item[2.] \textbf{Congestion bound}: the \emph{congestion} of an edge of the network is
defined as the number of paths using this edge. The greatest
congestion over all the edges of the network (called \emph{congestion} and denoted by $C$) is also a lower bound on the number of steps, since at each step an edge can be used by at
most one packet.

\item[3.] \textbf{Bisection bound}: Let $G=(V,E)$ be the graph which
models the network, and $F \subseteq E$ be a cut-set disconnecting $G$ into
two components $G_1$ and $G_2$. Let $m$ be the number of packets with
origin in $G_1$ and destination in $G_2$. The number of routing
steps used by any algorithm is at least $\PartIntSup{\frac{m}{|F|}}$.

\end{itemize}
\subsubsection{Off-line routing}
\label{sec:off}

Given a set of packets to be sent through a network, a \emph{path system}
is defined as the union of the paths that each packet must follow. For a
general network and any set of $n$ demands, we have seen in Section
\ref{sec:bounds} that the dilation and the congestion provide two lower
bounds for the routing time. This proves that the $dilation + congestion$
of a paths system used for the routing procedure is a lower bound of twice
the routing time. In a celebrated paper, Leighton, Maggs and Rao proved
the following theorem:

\begin{theorem}[\cite{LMR88}]
\label{teo:C+D} For any set of requests and a path system for these
requests, there is an {\bf off-line} routing protocol that needs $\mathcal
O(C+D)$ steps to route all the requests, where $C$ is the congestion and
$D$ is the dilation of the path system.
\end{theorem}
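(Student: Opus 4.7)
The plan is to follow the classical random-delays-plus-frames strategy based on the Lov\'asz Local Lemma. The idea exploits the fact that the distance bound $D$ and congestion bound $C$ can be met simultaneously, up to constants: if we assign each packet an initial waiting time and then let it proceed along its path without interruption, the expected per-edge load at each time step is roughly $C$ divided by the size of the delay window. The objective is then to reduce the per-edge per-step load to exactly $1$ while paying only a constant-factor overhead in the total schedule length.

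First, I would assign to each packet $p$ an initial delay $x_p$ drawn independently and uniformly from $\{1,2,\dots,\alpha C\}$ for a suitable constant $\alpha>1$. After waiting $x_p$ steps, the packet traverses its path at one edge per step, so every packet arrives by time $\alpha C + D = \mathcal{O}(C+D)$. For each edge $e$ and each time window $I$ of length $f$, let $B_{e,I}$ be the bad event that more than $\beta$ packets traverse $e$ during $I$. A Chernoff-type estimate bounds $\Pr[B_{e,I}]$, since the expected number of packets using $e$ inside a window of length $f$ is $\mathcal{O}(f/\alpha)$; moreover $B_{e,I}$ is mutually independent of all bad events whose packet sets are disjoint from that of $B_{e,I}$, which caps the dependency degree by roughly $C\cdot D$. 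Choosing $f$ and $\beta$ of order $\log(CD)$ makes the LLL inequality $4\Pr[B_{e,I}]\cdot (\text{dependency degree})\leq 1$ hold, proving the existence of a delay assignment in which every length-$f$ frame has per-edge load at most $\beta$.

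At this point the problem reduces, inside each frame, to a new packet-routing instance with congestion $C'=\beta$ and dilation $D'=f$. Applying the argument recursively and concatenating the refined sub-schedules frame by frame yields, after $\mathcal{O}(\log^*(CD))$ recursive levels, a final schedule in which every edge carries at most one packet per step, while the total length stays $\mathcal{O}(C+D)$ provided the per-level blow-up factors multiply to a bounded constant.

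The main obstacle is the delicate bookkeeping: one has to verify that the parameters $(\alpha,\beta,f)$ can be chosen so that (i) LLL succeeds at every level of the recursion, and (ii) the product of the multiplicative overheads across all levels remains bounded. In particular, a naive single application of LLL gives only $\beta=\mathcal{O}(\log(CD)/\log\log(CD))$, well above the desired value $1$, so the recursion, the terminating base case, and the splicing of sub-schedules must all be managed carefully. A secondary issue is that the pure LLL proof is non-constructive; an explicit (still off-line) schedule requires Beck's algorithmic version of the Local Lemma, as was done in the later work of Leighton, Maggs and Rao.
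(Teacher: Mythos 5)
Your sketch follows exactly the strategy of the original Leighton--Maggs--Rao proof that this paper cites without reproving: random initial delays, a Lov\'asz-Local-Lemma argument over edge/time-window bad events, and recursive refinement of frames until the per-edge per-step load is $1$, with Beck's constructive LLL needed for an explicit schedule. The paper itself offers no proof of Theorem~\ref{teo:C+D} (it is quoted from \cite{LMR88}, whose use of the Local Lemma and non-constructiveness the paper explicitly notes), so your outline is consistent with the intended argument, modulo the substantial bookkeeping you yourself flag.
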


In addition, in \cite{258658} the authors show that, given the set
of packets to be sent, it is possible to find in polynomial time a
path system with $C+D$ within a factor 4 of the optimum. Thus,
Theorem \ref{teo:C+D} can be announced in a more general way:

\begin{theorem}[\cite{258658}]
For any set of requests, there is an {\bf off-line} routing protocol that
needs $\mathcal O(C+D)$ steps to route all the requests, where $C+D$ is
the minimum $congestion+dilation$ over all the possible path systems.
\end{theorem}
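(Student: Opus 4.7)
The plan is to combine two ingredients: the already-stated Theorem \ref{teo:C+D} of Leighton--Maggs--Rao, which schedules a \emph{given} path system in $\mathcal{O}(C+D)$ steps, with an approximation result that supplies a good path system in the first place. Concretely, the result of \cite{258658} asserts that, given only the set of source--destination pairs, one can compute in polynomial time a collection of paths (one per request) whose $C+D$ is within a constant factor (here $4$) of the minimum $C+D$ achievable by any path system for the same requests. I would take this as a black box.

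First, I would apply the algorithm of \cite{258658} to the input set of requests to obtain a path system $\mathcal{P}$ with congestion $C(\mathcal{P})$ and dilation $D(\mathcal{P})$ satisfying
\[
C(\mathcal{P}) + D(\mathcal{P}) \;\le\; 4\cdot \min_{\mathcal{P}'}\bigl(C(\mathcal{P}') + D(\mathcal{P}')\bigr),
\]
where the minimum ranges over all feasible path systems for the given requests. Next, I would feed $\mathcal{P}$ into the routing protocol of Theorem \ref{teo:C+D}, which produces an off-line schedule routing every request along its chosen path in $\mathcal{O}\bigl(C(\mathcal{P})+D(\mathcal{P})\bigr)$ steps. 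Chaining the two bounds gives a schedule of length
\[
\mathcal{O}\bigl(C(\mathcal{P})+D(\mathcal{P})\bigr) \;=\; \mathcal{O}\bigl(\min_{\mathcal{P}'}(C(\mathcal{P}')+D(\mathcal{P}'))\bigr),
\]
which is exactly the claimed statement, with the constant in the $\mathcal{O}(\cdot)$ absorbing the factor $4$ from the path-selection step.

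The only real obstacle is hidden inside the black box of \cite{258658}: one must actually produce, in polynomial time, a path system whose $C+D$ competes with the optimum, and the natural LP-relaxation that minimizes congestion alone does not immediately control dilation. The standard way this is handled is via a mixed-integer relaxation that penalizes both parameters simultaneously, followed by a randomized rounding argument (or its derandomization) that preserves both congestion and dilation up to constant factors. Once that approximation is in hand, however, the rest of the argument is a one-line composition with Theorem \ref{teo:C+D}, so the proof is essentially a reduction rather than a new scheduling construction.
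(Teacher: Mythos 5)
Your proposal matches the paper's own (implicit) argument exactly: the text preceding the theorem states that \cite{258658} yields, in polynomial time, a path system whose $C+D$ is within a factor $4$ of the optimum, and the theorem then follows by composing this with Theorem \ref{teo:C+D}, the factor $4$ being absorbed into the $\mathcal O(\cdot)$. Your reduction is the same one-line composition, so the proof is correct and takes essentially the same approach.
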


Furthermore, this routing protocol uses fixed buffer size, i.e., the queue
size at each is bounded by a constant at each step. Nevertheless, it
is important to notice that a huge constant may be hidden inside the
$\mathcal O$ notation.
 As we said before, this result
was further improved in \cite{Leighton_fast_95} where the same
authors gave an explicit algorithm. These algorithms to compute
the optimal schedule are centralized. In a distributed algorithm
nodes must make their decisions independently, based on the
packets they see, without the use of a centralized scheduler. In
\cite{Universal97} Ostrovsky and Rabani gave a distributed
randomized algorithm running in $\mathcal{O}(C+D +
\log^{1+\epsilon}(n))$ steps. We refer to Scheideler's
thesis~\cite{Sch98} for a complete compilation of general packet
routing algorithms.



\subsubsection{On-line routing}
\label{sec:on}

In the on-line setting, the oldest on-line protocol that deviates only by
a factor logarithmic in $n$
from the best possible runtime $\mathcal O(C+D)$ for arbitrary
path-collections is the protocol presented by Leighton, Maggs and
Rao in the same paper~\cite{LMR88}, running in $\mathcal O(C+D\log
(Dn))$ steps with high probability.
This schedule assumes that the paths are given a priori,
hence it does not consider the problem of choosing the paths to route the
packets.

The results of \cite{AAF+97} provide a routing algorithm that is
$\log n$ competitive with respect to the congestion. In other
words, it is worse than an optimal off-line algorithm only by a
factor $\log n$. In this setting the demands arrive one by one and
the algorithm routes calls based on the current congestion on the
various links in the network, so this can be achieved only via
centralized control and serializing the routing requests. In
\cite{AA94} the authors gave a distributed algorithm that
repeatedly scans the network so as to choose the routes. This
algorithm requires shared variables on the edges of the network
and hence is hard to implement. Note that the two on-line
algorithms above depend on the demands and are therefore {\it
adaptive}. Recall that an {\it oblivious} routing strategy is
specified by a path system $\mathcal P$ and a function $w$
assigning a weight to every path in $\mathcal P$. This function
$w$ has the property that for every source-destination pair
$(s,t)$, the system of flow paths $\mathcal P_{s,t}$ for $(s,t)$
fulfills $\sum_{q \in \mathcal P_{s,t}} w(q) = 1$. One can think
of this function as a frequency distribution among several paths
going from an origin $s$ to a destination $t$. In {\it adaptive}
routing, however, the path taken by a packet may also depend on
other packets or events taking place in the network during its
travel. Remark that every oblivious routing strategy is obviously
on-line and distributed.

The first paper to perform a worst case theoretical analysis on
oblivious routing is the paper of Valiant and
Brebner~\cite{ValBre81}, who considered routing on specific
network topologies such as the hypercube. They gave a randomized
oblivious routing algorithm. Borodin and Hopcroft~\cite{BoHo82}
and subsequently Kaklamanis, Krizanc, and Tsantilas~\cite{KKT91}
showed that deterministic oblivious routing algorithms cannot
approximate well the minimal load on any non-trivial network.

In a recent paper, R\"acke~\cite{Rac02} gave the construction of a
polylog competitive oblivious routing algorithm for general
undirected networks. It seems truly surprising that one can come
close to minimal congestion without any information on the current
load in the network. This result has been improved by Azar
\emph{et al.}~\cite{ACF+04}. Lower bounds on the competitive ratio
of oblivious routing have been studied for various types of
networks. For example, for the $d$-dimensional mesh, Maggs
\emph{et al.}~\cite{MHV+97} gave the $\omega(\frac{C_*}d (\log
n))$ lower bound on the competitive ratio of an oblivious
algorithm on the mesh, where $C_*$ is the optimal congestion.

So far, the oblivious algorithms studied in the literature have
focused on minimizing the congestion while ignoring the dilation.
In fact, the quality of the paths should be determined by the
congestion $C$ and the dilation $D$.  An open question is whether
$C$ and $D$ can be controlled simultaneously. An appropriate
parameter to capture how good is the dilation of a path system is
the \emph{stretch}, defined as the maximum over all packets of the
ratio between the length of the path taken by the routing protocol
and the length of a shortest path from source to destination. In a
recent work, Bush \emph{et al.}~\cite{BMX05} considered again the
case of the $d$-dimensional mesh. They presented an on-line
algorithm in which $C$ and $D$ are both within $\mathcal O(d^2)$
of the potential optimal, i.e., $D = \mathcal O(d^2 D_*)$ and $C \mathcal O(dC_* \log(n))$, where $D_*$ is the optimal dilation.
Note that by the results of Maggs \emph{et al.}~\cite{MHV+97}, it
is impossible to have a factor better than $\Omega(\frac{C_*}d
\log n )$.

There is a simple counter-example network that shows that in
general the two metrics (\emph{dilation} and \emph{congestion})
are orthogonal to each other: take an adjacent pair of nodes $u,v$
and $\Theta (\sqrt n)$ disjoint paths of length $\Theta(\sqrt n)$
between $u$ and $v$. For packets traveling from $u$ to $v$, any
routing algorithm that minimizes congestion has to use all the
paths, however, in this way some packets follow long paths, giving
high stretch. Nevertheless, in grids~\cite{BMX05}, and in some
special kind of geometric networks~\cite{BMXb05} the congestion is
within a poly-logarithmic factor from optimal and stretch is
constant ($d$ the dimension). As mentioned before an interesting
open problem is to find other classes of networks where the
congestion and stretch are minimized simultaneously \cite{ABD+06}.
Possible candidates for such networks could be for example
bounded-growth networks, or networks whose nodes are uniformly
distributed in closed polygons, which describe interesting cases
of wireless networks.

The recent paper of Maggs \cite{Mag06} surveys a collection of
theoretical results that relate the congestion and dilation of the
paths taken by a set of packets in a network to the time required
for their delivery.



\subsection{Routing Problems}
\label{sec:problem}

The initial and final positioning of the packets has a direct influence on
the time needed for their routing. Considering static packet
configuration, the most studied constraints refer to the maximum number of
packets that a node can send and receive. Due to
their practical importance, some of these problems have specific names:\\

\begin{itemize}
\item[1.] \textbf{Permutation routing}: each node is
the origin and the destination of at most one packet. To measure the
routing capability of an interconnection network, the partial permutation
routing (PPR) problem is usually used as the metric.
\item[2.] \textbf{$(\ell,k)$-routing:} each node is the origin of at most $\ell$
packets and destination of at most $k$ packets. Permutation routing
corresponds to the case $\ell=k=1$ of $(\ell,k)$-routing. Another
important particular case is the \textbf{$(1,k)$-routing}, in which each
node sends at most one packet and receives at most $k$ packets.
\item[3.] \textbf{$(1,any)$-routing}: each node is the
origin of at most one packet but there are no constraints on the number of
packets that a node can receive.
\item[4.] \textbf{$r$-central routing}: all nodes at distance at most $r$ of a central
node send one message to this central node.\\
\end{itemize}

\noindent In all these problems, we are given an initial packet
configuration and the objective is to route all packets to their
respective destinations minimizing the total routing time, under the
constraint that each edge can be used by at most one packet at the same
time.

Besides of the constraints about the initial and final positions of the
packets, there also exist different routing models at the intermediate
nodes of the network. For instance, in the \emph{hot potato model} no
packet can be stored at the nodes of the network, whereas in the
\emph{store-and-forward} at each step a packet can either stay at a node
or move to an adjacent node.

On the other hand, one can consider constraints on the number of
incident edges that each node of the network can use to send or
receive packets at the same time. In the $\Delta$-port model
\cite{deltaport}, each node can send or receive packets through
all its incident edges at the same time.

In this article we study the store-and-forward $\Delta$-port model. In
addition, we suppose that cohabitation of multiple packets at the same
node is allowed. I.e., a queue is required for each outgoing edge at each
node.

The nature of the links of the network is another factor that influences
the routing efficiency. The type of links is usually one of the following:
full-duplex or half-duplex. In the full-duplex case there are two links
between two adjacent nodes, one in each direction. Hence two packets can
transit, one in each direction, simultaneously. In the half-duplex case
only one packet can transit between two nodes, either in one direction of
the edge or in the other. In this paper we study both half and
full-duplex links.

\subsection{Topologies}
\label{sec:previous}

We now give a brief summary of various cases of $(\ell,k)$-routing
and $(1,any)$-routing that have been studied for several specific
topologies. More precisely, in Section \ref{sec:different} we list
some of the important results for some networks which have
attracted interest in the literature, like hypercubes and
circulant graphs. We move then to plane grids in Section
\ref{sec:plane}. It is well known that there exist only three
possible tessellations of the plane into regular polygons
\cite{williams1979gfn} : squares, triangles and hexagons. These
graphs are those which we study in this article.


\subsubsection{Different network topologies}
\label{sec:different}

Hwang, Yao, and Dasgupta~\cite{low_hyper} studied the permutation
routing problem in low-dimensional hypercubes ($d\leq 12$). They
gave optimal or good-in-the-worst-case oblivious algorithms.
 Another network widely studied in the literature is the two dimensional mesh
with row and column buses. This network can also be diversified
according to the capacities of the buses. In \cite{Suel_94}, Suel
gave a deterministic algorithm to solve the permutation routing
problem in such networks. The algorithm provides a schedule using
at most $n+o(n)$ steps and queues of size two.
He also proposed a deterministic algorithm for $r$-dimensional
arrays with buses working in $(2-\frac{1}{r})n+o(n)$ steps and
still using queues of size 2. In \cite{KNR91}, the authors studied
the $(\ell,\ell)$-routing problem in the mesh grid with two
diagonals and gave a deterministic algorithm using $\frac{2\ell
n}{9}+\mathcal(\ell n^{2/3})$ steps for $\ell \geq 9$.

In \cite{big_foot}, the authors introduced an algorithm called
\emph{big foot} algorithm. The idea of this algorithm is to
identify two types of links and to move towards the destination
using first the links of the first type and then those of the
second type. The algorithms we develop will use such a strategy.
They give an optimal centralized algorithm for the permutation
routing problem in full-duplex 2-circulant graphs and
\emph{double-loop} networks. This later network is of great
practical importance. It is modeled by a graph with vertex set
$V=\{v_0, \dots, v_{n-1}\}$ such that there are two integers $h_1$
and $h_2$ such that the edge set is $E=\{v_iv_{i \pm h_1},v_iv_{i
\pm h_2}\}$. The permutation routing problem in this network is
studied by Dobravec, Robi\v c, and  \v Zerovnik~\cite{Janez3}. The
authors gave an algorithm for the permutation routing problem
which in mean uses $1.12 \ell$ steps (the mean being empirically
measured). In \cite{Janez2} the authors described an optimal
centralized permutation routing algorithm in $k$-circulant graphs
($k\geq 2$), and in \cite{Janez1} an optimal distributed
permutation routing in 2-circulant graphs was obtained.

The problem has been also studied for packets arriving dynamically
by Havil in~\cite{havill01online}, where an optimal online
schedule for the linear array is given. Havil also gave a
2-approximation for rings and show that, using shortest path
routing, no better approximation algorithm exists. Jan and
Lin~\cite{CCC} studied Cube Connected Cycles $CCC(n,2^n)$. These
are hypercubes of dimension $n$ where each node is replaced by a
cycle of length $n$. They gave an algorithm working in
$\mathcal{O}(n^2)$ with $\mathcal{O}(1)$ buffers for the online
partial permutation routing (PPR).

\subsubsection{Plane grids}
\label{sec:plane}

Maybe the most studied networks in the literature are the two dimensional
grids (or plane grids), and among them in particular the square grid has
deserved special attention. Let us briefly overview what has been
previously done on $(\ell,k)$-routing in plane grids.

Leighton, Makedon, and Tollis~\cite{constant95} obtained the first
optimal permutation routing with running time $2n-2$ and queues of
size 1008. Rajasekaran and Overholt~\cite{rajasekaran} reduced the
queue size to 112. Sibeyn, Chlebus, and
Kaufmann~\cite{sibeyn97deterministic} reduced this to 81.
Furthermore, they provided another algorithm running in
near-optimal time $2n + \mathcal{O}(1)$ steps with a maximum queue
size of only 12. Makedon and Symvonis~\cite{Makedon_opt_93}
introduced the $(1,k)$-routing and the $(1,any)$-routing problems
and gave an \emph{asymptotically} optimal algorithm for
$(1,k)$-routing on plane grids, with queues of small constant
size.  This result was further improved by Sibeyn and Kaufman
in~\cite{Sibeyn_1k}, where they gave a near-optimal deterministic
algorithm running in $\sqrt{k}\frac{n}{2}+\mathcal{O}(n)$ steps.
They gave another algorithm, slightly worse, in terms of number of
steps, but with queues of size only 3. They also studied the
general problem of $(\ell,k)$-routing in square grids. They
proposed lower bounds and near-optimal randomized and
deterministic algorithms. They finally extended these algorithms
to higher dimensional meshes. They performed $(\ell,\ell)$-routing
in $\mathcal{O}(\ell n)$ steps, the lower bound being
$\Omega(\sqrt{\ell kn})$ for $(\ell,k)$-routing. Finally,
Pietracaprina and Pucci~\cite{pietracaprina01optimal} gave
deterministic and randomized algorithms for $(\ell,k)$-routing in
square grids, with constant queue size. The running time is
$\mathcal{O}(\sqrt{\ell kn})$ steps, which is optimal according to
the bound of \cite{Sibeyn_1k}. This work closed a gap in the
literature, since optimal algorithms were only known for $\ell =1$
and $\ell = k$.



Nodes in a hexagonal network are placed at the vertices of a
regular triangular tessellation, so that each node has up to six
neighbors. In other words, a hexagonal network is a finite
subgraph of the triangular grid. These networks have been studied
in a variety of contexts, specially in wireless and
interconnection networks. The most known application may be to
model cellular networks with hexagonal networks where nodes are
base stations. But these networks have been also applied in
chemistry to model benzenoid hydrocarbons
\cite{chemistry,klavzar}, in image processing and computer
graphics \cite{comp_graph}.

In a radiocommunication wireless environment \cite{Stoj1}, the
interconnection network among base stations constitutes a
hexagonal network, i.e., a triangular grid,
 as it is shown in \textsc{Fig.} \ref{fig:hex_net}.

\begin{figure}[t]
\begin{center}
\includegraphics[width=5.0cm]{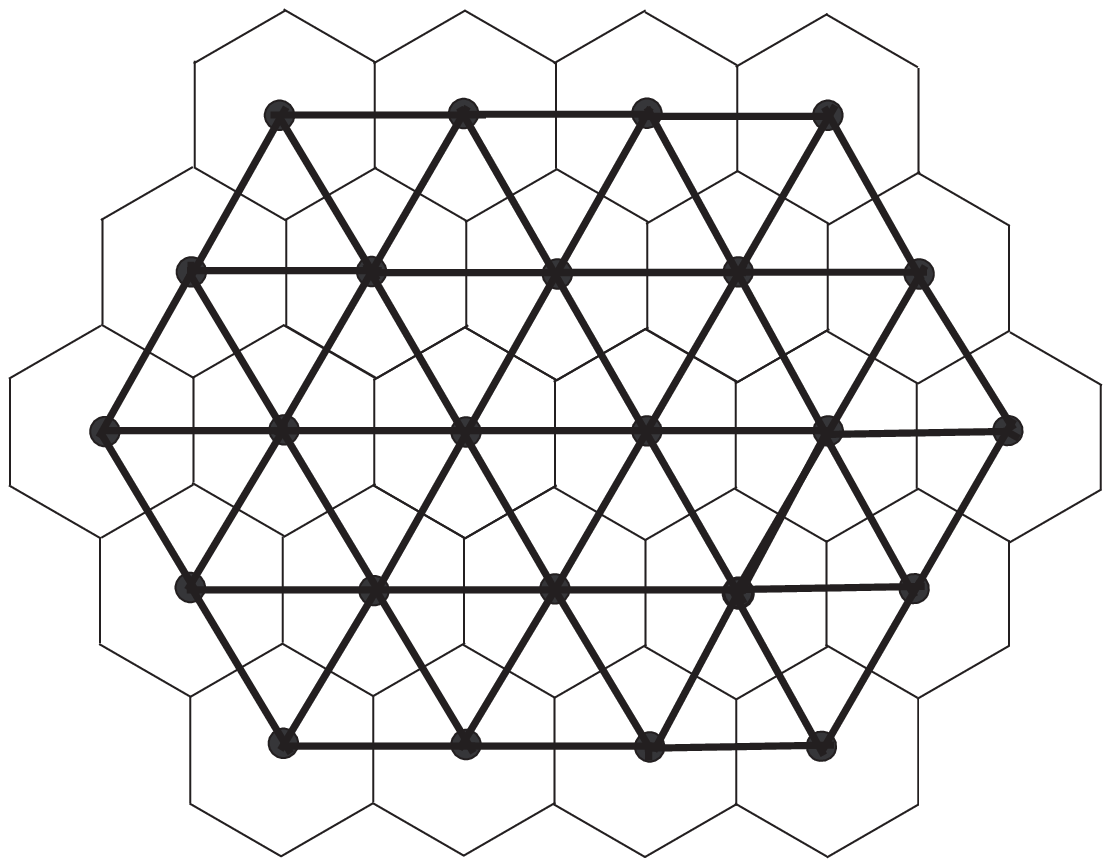}
\caption{Hexagonal network ($\triangle$) and hexagonal tessellation ({\Large\varhexagon}).}
\label{fig:hex_net}
\end{center}
\end{figure}

Tessellation of the plane with hexagons may be considered as the most
natural model of networks because cells have optimal diameter to area ratio.
The triangular grid can also be obtained from the basic 4-mesh by
adding NE to SW edges, which is called a 6-mesh in \cite{trobec}.
Here we study convex subgraphs, i.e., subgraphs that contain all
shortest paths between all pairs of nodes, of the square,
triangular and hexagonal grids. Summarizing, to the best of our
knowledge the only optimal algorithms concerning
$(\ell,k)$-routing on plane grids (according to the lower bound of
\cite{Sibeyn_1k}) have been found on square grids, but modulo a
constant factor \cite{pietracaprina01optimal}. On triangular and
hexagonal grids, the best results are randomized algorithms with
good performance \cite{Sybeyn97}.

\subsection{Our Contribution}
In this paper we study the permutation routing, $r$-central and
$(\ell,k)$-routing problems on plane grids, that is square grids,
triangular grids and hexagonal grids. We use the \emph{store-and-forward}
$\Delta$-port model, and we consider both full and half-duplex networks.

We have seen in Section \ref{sec:plane} that the only plane grid for which
there existed an optimal $(\ell,k)$-routing is the square grid. In
addition, these articles
concerning $(\ell,k)$-routing in plane grids are optimal modulo a constant
factor. In this paper we improve these results by giving tight algorithms
including the constant factor, in the cases of square, triangular and hexagonal
grids. It is important to stress that all the algorithms presented in this
paper except the one given in
\ref{sec:weighted} are distributed.
Our algorithms only use shortest paths, therefore they achieve minimum
stretch. In addition, the algorithms are oblivious, so they can be used in
an on-line scenario. However the performance guarantees that we prove apply
only to the off-line case. The new results are the
following:
\begin{itemize}

\item[1.] Tight (also including the constant factor) permutation routing algorithms in
full-duplex hexagonal grids, and half duplex triangular and hexagonal
grids.
\item[2.] Tight (also including the constant factor) $r$-central routing algorithms in
 triangular and hexagonal grids.

 \item[3.] Tight (also including the constant factor) $(k,k)$-routing algorithms in
square, triangular and hexagonal grids.

\item[4.] Good approximation algorithms for $(\ell,k)$-routing in
square, triangular and hexagonal grids.\\
\end{itemize}

\noindent This paper is structured as follows. In Section
\ref{sec:perm_rout} we study the permutation routing problem.
Although permutation routing had already been solved for square
grids, we begin in Section \ref{sec:square} by illustrating our
algorithm for such grids. Then in Section \ref{sec:tri} we give
tight permutation routing algorithm for half-duplex triangular
grids, using the optimal algorithm of \cite{INOC07}. In Section
\ref{sec:hex} we provide a tight permutation routing algorithm for
full-duplex hexagonal grids and a tight permutation routing
algorithm for half-duplex hexagonal grids. In Section
\ref{sec:any} we focus on $(1,any)$-routing, giving an optimal
$r$-central routing algorithms for the three types of grids. We
finally move in Section \ref{sec:lk} to the general
$(\ell,k)$-routing problem. We provide a distributed algorithm for
$(\ell,k)$-routing in any grid, using the ideas of the optimal
algorithm for permutation routing. We also prove lower bounds for
the worst-case running time of any algorithm using shortest path
routing.
In addition, these lower bounds allow us to prove that our algorithm turns
out to be tight when $\ell=k$, yielding in this way a tight
$(k,k)$-routing algorithm in square, triangular and hexagonal grids. We
propose in~\ref{sec:weighted} an approach to
$(\ell,k)$-routing in terms of a graph coloring problem: the
\textsc{Weighted Bipartite Edge Coloring}. We give a centralized algorithm
using this reduction.

\section{Permutation Routing}
\label{sec:perm_rout} As we have already said in Section
\ref{sec:intro}, in the permutation routing problem, each
processor is the origin of at most one packet and the destination
of no more than one packet. The goal is to minimize the number of
time steps required to route all packets to their respective
destinations. It corresponds to the case $\ell=k=1$ of the general
$(\ell,k)$-routing problem. This problem has been studied in a
wide diversity of scenarios, such as Mobile Ad Hoc Networks
\cite{init_prot}, Cube-Connected Cycle (CCC) Networks \cite{CCC},
Wireless and Radio Networks \cite{wireless}, All-Optical Networks
\cite{optical} and Reconfigurable Meshes \cite{mesh_reconf}.



In a grid with full-duplex links an edge can be crossed simultaneously by
two messages, one in each direction. Equivalently, each edge between two
nodes $u$ and $v$ is made of two independent arcs $uv$ and $vu$,
as illustrated in Fig. \ref{fig:2arcs_axis}a.

\begin{figure}[h!tb]
\begin{center}
\includegraphics[width=9.6cm]{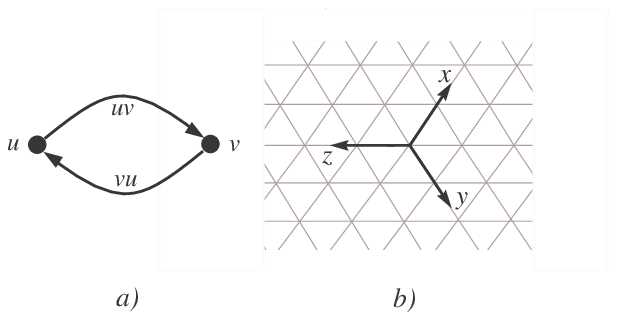}
\caption{\textbf{a)} Each edge consists of two independent links. \textbf{b)} Axis used in a
triangular grid.} \label{fig:2arcs_axis}
\end{center}
\end{figure}

\begin{observation}
\label{obs:2}If the network is half-duplex, it is easy to
construct a 2-approximation algorithm from an optimal algorithm
for the full-duplex case by introducing \emph{odd-even} steps, as
explained for example~in~\cite{Janez3}.
\end{observation}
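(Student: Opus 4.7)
The plan is to simulate any full-duplex schedule of length $T$ by a half-duplex schedule of length $2T$, using the only fact that distinguishes the two models: in half-duplex the two arcs of an undirected edge cannot be active in the same step, while every other ingredient of the store-and-forward $\Delta$-port model is unchanged. I would first fix an arbitrary orientation of every undirected edge of the grid. Given an optimal full-duplex algorithm $A$, at each full-duplex step $t$ I split the set of arc moves into a positive class $P_t$ (moves that agree with the chosen orientation) and a negative class $N_t$ (the opposite ones). Because every undirected edge is traversed in a single direction within each class, and because the full-duplex schedule already forbids two packets on the same arc simultaneously, each of $P_t$ and $N_t$ is, by itself, a legal half-duplex configuration.

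Next, I would map full-duplex step $t$ to two consecutive half-duplex sub-steps: at the odd sub-step $2t-1$ execute all moves in $P_t$ (keeping every other packet in its queue), and at the even sub-step $2t$ execute all moves in $N_t$. Since queues are preserved between sub-steps and no packet is involved in both $P_t$ and $N_t$ in the same step of $A$ (each packet performs at most one move per step of $A$), the packet configuration at the end of sub-step $2t$ coincides exactly with the configuration after step $t$ of $A$. A straightforward induction on $t$ then shows correctness, and the total length is exactly $2T$ half-duplex steps.

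Finally, I would close the argument by invoking the obvious inequality $T_{\mathrm{HD}}^{\star} \geq T_{\mathrm{FD}}^{\star}$, where the starred quantities denote the respective optima: any half-duplex schedule is automatically a full-duplex schedule, so the full-duplex optimum cannot be larger. Taking $A$ to be optimal for the full-duplex case, the simulated schedule has length $2\,T_{\mathrm{FD}}^{\star} \leq 2\,T_{\mathrm{HD}}^{\star}$, which is precisely a $2$-approximation.

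There is essentially no obstacle: the whole argument is a parity-based scheduling trick, identical in spirit to the odd-even edge-coloring used in \cite{Janez3}. The only point that genuinely requires a sentence is why the orientation-based split produces no conflict within a single class, and this is immediate from the full-duplex arc constraint together with the arbitrary but fixed choice of orientation.
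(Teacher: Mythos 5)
Your proposal is correct and is essentially the argument the paper intends: the ``odd-even steps'' of the remark are exactly your splitting of each full-duplex step into two half-duplex sub-steps according to a fixed orientation of the edges, combined with the observation that the half-duplex optimum is at least the full-duplex optimum. The paper itself only cites the technique without spelling it out, and your write-up supplies the missing details (legality of each sub-step, preservation of the configuration, and the final approximation inequality) correctly.
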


\subsection{Square grid}
\label{sec:square}

Many communication networks are represented by graphs satisfying the
following property: for any pair of nodes $u$ and $v$, the edges of a
shortest path from $u$ to $v$ can be partitioned into $k$ disjoint classes
according to a well-defined criterium. For instance, on a triangular grid
 the edges of a shortest path can be partitioned into
\emph{positive} and \emph{negative} ones \cite{INOC07}. Similarly,
on a $k$-circulant graph the edges can be partitioned into $k$
classes according to their length.

In graphs that satisfy this property there exists a natural
routing algorithm: route all packets along one class of edges
after another. For hexagonal networks this algorithm turns out to
be optimal \cite{INOC07}. Optimality for 2-circulant graphs is
proved using a static approach in \cite{big_foot}, and recently
using a dynamic distributed algorithm in \cite{Janez1}. In
\cite{big_foot} the authors introduce the notion of
\emph{big-foot} algorithms because their algorithm routes packets
first along \emph{long hops} and then along \emph{short hops} in a
2-circulant graph.

On the square grid, the \emph{big-foot} algorithm consists of two phases,
moving each packet first horizontally and then vertically. In this way a
packet may wait only during the second phase. Using the fact that all destinations
are distinct, the optimality for square grid is easy to prove.
Summarizing, it can be proved that

\begin{theorem}
\label{teo:perm_square} There is a translation invariant oblivious optimal
permutation routing algorithm for full-duplex networks that are convex
subgraphs of the infinite square grid.
\end{theorem}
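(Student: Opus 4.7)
The plan is to prove Theorem \ref{teo:perm_square} by analyzing the \emph{big-foot} algorithm described informally above: each packet with source $(r_s,c_s)$ and destination $(r_d,c_d)$ first traverses its source row horizontally from column $c_s$ to column $c_d$ (phase~1), and then its destination column vertically from row $r_s$ to row $r_d$ (phase~2). Translation invariance and obliviousness are immediate from this definition, since the algorithm is specified purely in terms of each packet's own source--destination coordinates, commutes with translations of the grid, and uses no global information.

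A convex subgraph of the square grid is easily seen to be an axis-aligned rectangle (the bounding box of any pair of its nodes lies in it by the shortest-path property, and iterating yields the full rectangle), so write $G$ as an $H\times W$ rectangle of diameter $D=H+W-2$. For the matching lower bound I would exhibit the permutation that swaps two opposite corners of $G$ and fixes the other nodes: its dilation is $D$, so by the distance bound of Section~\ref{sec:bounds} every algorithm uses at least $D$ steps.

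For the upper bound I would decompose the analysis along the two phases. In phase~1, on any fixed row, the packets moving in the same direction have pairwise distinct source columns and advance in lockstep, so at every step they occupy pairwise distinct oriented arcs; the full-duplex assumption further separates the right- and left-moving streams onto the two arcs of each edge. Hence phase~1 incurs no delay, terminates globally by step $W-1$, and delivers every packet $p$ to its destination column at time exactly $|c^p_s-c^p_d|$.

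The core of the argument, and the only place where the permutation hypothesis is used essentially, is phase~2. On each column $c$, the packets routed to $c$ by phase~1 possess pairwise distinct destination rows, so the vertical sub-problem on $c$ is a (generalized) permutation routing on a path of $H$ nodes, with cohabitation allowed at the entry nodes. Full-duplex again decouples the upward and downward streams. I would show that a simple priority-based greedy rule---on any contended arc, give priority to the packet with the larger remaining vertical distance---completes phase~2 in at most $H-1$ further steps after phase~1 finishes, yielding total running time $(W-1)+(H-1)=D$ and hence optimality. The main obstacle is precisely the correctness of this greedy rule: the schedule must cope with several packets cohabiting at an entry node of $c$ and competing for the same vertical arc. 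My plan is to first establish the cleanest version of the bound by imposing a synchronous barrier between the two phases, so that every column sub-problem starts from a fixed multi-source configuration, and then argue that running phase~2 asynchronously---each packet beginning its vertical descent as soon as it reaches column $c_d$---can only shorten the schedule, preserving the $D$-step bound.
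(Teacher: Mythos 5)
Your proposal takes essentially the same route as the paper: the paper's own argument is exactly the two-phase \emph{big-foot} ($x$--$y$) routing with the one-line justification that phase 1 is delay-free and phase 2 is handled by the distinctness of destinations, i.e.\ the classical farthest-first linear-array analysis you outline as your key lemma. Your supporting observations (convex subgraphs of the square grid are axis-aligned rectangles, the corner-swap instance giving the matching diameter lower bound, and the full-duplex decoupling of opposite streams) are correct and merely make explicit what the paper leaves implicit.
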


\subsubsection{Regarding the queue size}

Of course, this is not the first optimal permutation routing
result on square grids, as the classical $ x-y$ routing (first
route packets through the horizontal axis, and then through the
vertical axis) has been used for a long time. Thus, another more
challenging issue is to reduce the queue size, as we have already
discussed in Section \ref{sec:plane}. Leighton describes in
\cite{duplex} a simple off-line algorithm for solving any
permutation routing problem in $3n-3$ steps on a $n \times n$
square grid, using queues of size one. Since the diameter of a $n
\times n$ square grid is $2 n -2$, this algorithm provides a
$\frac{3}{2}$-approximation. The main drawback is that this
algorithm is off-line and centralized. In contrast, our oblivious
distributed algorithm is optimal in terms of running time, but it
is easy to see that on a $n \times n$ square grid, the queue size
can be $\frac{n-1}{2}$. Up to date, the best algorithm running in
optimal time to route permutation routing instances on square
grids is the algorithm of Sibeyn et al.
\cite{sibeyn97deterministic}, using queues of size 81. So far,
there is no algorithm that guarantees optimal running time with
queues of size 1, and it is unlikely that such an algorithm
exists.

\begin{observation} The same observation regarding the unbounded queue size
applies to all the algorithms described in this article. However, our aim
is to match the optimal running time, rather than minimizing the queue
size. Additionally, it turns out that some appropriate modifications of
the permutation routing algorithms that we provide for plane grids allow
us to find oblivious algorithms which route any permutation within a
factor 3 of the optimal running time, and using queues of size 1 (in fact,
we can say something stronger: we just need memory to keep 1 message at
each node). We do not describe these modifications in this article.
\end{observation}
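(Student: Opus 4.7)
The plan is to justify the observation's two claims separately, since neither is accompanied by a formal proof in the paper.

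For the claim that all the algorithms in the article require unbounded queues, the construction sketched for the square grid in the preceding paragraph extends in a routine way to the other grids. Every algorithm in the paper is a big-foot type scheme that routes along one edge class and then another, so one can craft an adversarial permutation in which $\Theta(n)$ packets originating along a single line of the first-phase edge class all need to turn at a common vertex to begin their second phase. Writing the permutation explicitly for each of the three grid geometries (using the class partition of shortest-path edges described in Section \ref{sec:square}) gives a linear lower bound on the queue size for every algorithm in the paper, including the $(\ell,k)$-routing ones since they inherit the big-foot strategy.

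For the claim that modifications yield a factor-$3$ oblivious algorithm with memory for only one packet per node, the idea is to prevent the pile-up at turning vertices by serialising packets locally and allowing short detours. Concretely, I would partition time into three blocks, each of length equal to the diameter $D$. In block one, each node forwards its packet (if any) along the first-phase edge class; in block two, a local priority rule based on the destination coordinate lets at most one packet cross any turning vertex per step, and every other packet still occupying that vertex is deflected by one step onto a parallel line of the second-phase class; in block three, all deflected packets finish their second phase. Because the destinations are distinct, the priority rule serialises correctly at every vertex, each detour costs a constant number of steps, and the total time is bounded by $3D \le 3 \cdot \mathrm{OPT}$. The memory-$1$ property is preserved because at each time step each node either forwards its unique packet or swaps it with an incoming one, never holding two simultaneously.

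The main obstacle is making this scheme genuinely \emph{oblivious} and \emph{distributed} under the memory-$1$ constraint: each node must decide locally, from the single packet it currently stores and its own coordinates, whether to continue the first phase, begin the second phase, or take a one-step detour, without consulting any global schedule. Designing such a local rule and proving both collision-freeness and the $3 \cdot \mathrm{OPT}$ bound rigorously for each of the three grid geometries is the technical heart of the statement, and is presumably why the authors chose to record it as an observation and defer the details.
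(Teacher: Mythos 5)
First, note that the paper offers no proof of this remark at all: the authors explicitly write that they do not describe the modifications, so there is no argument of theirs to compare yours against. Judged on its own terms, your sketch of the first claim (unbounded queues for all the algorithms) is plausible and in the spirit of the $\frac{n-1}{2}$ example the paper gives for the square grid, but your sketch of the second claim has several genuine gaps.

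The most serious one is that your deflection scheme is not \emph{oblivious}, which is precisely what the remark asserts. By the paper's own definition in Section \ref{sec:on}, an oblivious strategy fixes the path of each packet by a weighted path system independently of the other packets; in your block two, whether a packet is ``deflected by one step onto a parallel line'' depends on whether it loses a local priority contest, i.e., on the presence of other packets, which makes the strategy adaptive. Second, the scheme is internally inconsistent with the memory-1 constraint: if each node stores at most one packet, there is no such thing as ``every other packet still occupying that vertex'' waiting to be serialised; in the square and triangular grids up to four or six packets can arrive at a node in the same step, all but one must be expelled immediately, a single parallel-line detour need not absorb them, and deflected packets can collide again, so the claim that each detour costs a constant number of steps needs an argument that deflections do not cascade --- which is exactly the hard part. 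Third, your time bound $3D \le 3\cdot\mathrm{OPT}$ uses the graph diameter $D$, whereas the optimum is only lower-bounded by $\ell_{\max}$, the longest shortest path among the packets actually routed, which can be much smaller than the diameter; the blocks would have to have length $\ell_{\max}$, and you would still have to show that every packet, including every deflected one, finishes its phase within its block. As it stands, the sketch correctly identifies the difficulty (a local, collision-free, memory-1 rule) but does not resolve it, and the deflection idea is in direct tension with the obliviousness that the statement requires.
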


\subsection{Triangular grid}

\label{sec:tri} We use the addressing scheme introduced in
\cite{Stoj1} and used also in \cite{INOC07}: we represent any
address on a basis consisting of three unitary vectors
\emph{\textbf{i}, \textbf{j}, \textbf{k}} on the directions of
three axis $x,y,z$ with a $120$ degree angle among them,
intersecting on an arbitrary (but fixed) node $O$ . This node is
the origin and is given the address
$\text{\textbf{\emph{O}}}=(0,0,0)$. This basis is represented in
\textsc{Fig.} \ref{fig:2arcs_axis}b. Thus, we can assume that each
node $P\in V$ is labeled with an address
$\text{\textbf{\emph{P}}}=(P_1,P_2,P_3)$ expressed in this basis
\{\emph{\textbf{i}, \textbf{j}, \textbf{k}}\} with respect to the
origin $O$. At the beginning, each node $S$ knows the address of
the destination node $D$ of the message placed initially at $S$,
and computes the relative address
$\overrightarrow{SD}=\text{\textbf{\emph{D}}} -
\text{\textbf{\emph{S}}}$ of the message. Note that this relative
address does not depend on the choice of the origin node $O$. This
relative address is the only information that is added in the
heading of the message to be transmitted, constituting in this way
the packet to be sent through the network.

Using that
$\text{\emph{\textbf{i}}}+\text{\emph{\textbf{j}}}+\text{\emph{\textbf{k}}}=0$,
it is easy to see that if $(a,b,c)$ and $(a',b',c')$ are the relative
addresses of two packets, then $(a,b,c)=(a',b',c')$ if and only if there
exists $d \in \mathbb{Z}$ such that $a'=a+d$, $b'=b+d$, and $c'=c+d$.

We say that an address $\overrightarrow{SD}=(a,b,c)$ is of the
\emph{shortest path form} if there is a path from node $S$ to node $D$,
consisting of $a$ units of vector $\text{\textbf{\emph{i}}}$, $b$ units of
vector $\text{\textbf{\emph{j}}}$ and $c$ units of vector
$\text{\textbf{\emph{k}}}$, and this path has the shortest length.

\begin{theorem}[\cite{Stoj1}]
\label{teo:short} An address $(a,b,c)$ is of the shortest path form if and
only if at least one component is zero, and the two other components do not have
the same sign.
\end{theorem}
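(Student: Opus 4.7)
The plan is to recognise shortest-path-form addresses as the minimisers of a convex piecewise-linear length function and to read off the characterisation from where this minimum is attained. The starting observation is that, because $\text{\emph{\textbf{i}}}+\text{\emph{\textbf{j}}}+\text{\emph{\textbf{k}}}=0$, for every $d\in\mathbb{Z}$ the triple $(a+d,b+d,c+d)$ is another address of the same vector $\overrightarrow{SD}$, and the path it encodes has length $f(d):=|a+d|+|b+d|+|c+d|$. Hence $(a,b,c)$ is of shortest path form if and only if $d=0$ realises the minimum of $f$ over $\mathbb{Z}$.

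For the ``if'' direction, I would assume (up to permutation of the coordinates) that $b=0$ and that $a$ and $c$ do not share a strict sign, say $a\geq 0\geq c$. The triangle inequality then yields
\[
|a+d|+|c+d| \;\geq\; \bigl|(a+d)-(c+d)\bigr| \;=\; a-c \;=\; |a|+|c|,
\]
so that $f(d)\geq |a|+|c|+|d|\geq f(0)$ for every $d$, and $(a,b,c)$ is indeed of shortest path form.

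For the converse, I would exploit the fact that $f$ is a convex piecewise-linear function of $d$ with breakpoints at $\{-a,-b,-c\}$ and slopes $-3,-1,+1,+3$ from left to right: the unique global minimiser is $d=-\mathrm{med}\{a,b,c\}$. Thus $d=0$ minimises $f$ if and only if $0$ is the median of $\{a,b,c\}$, which, translated combinatorially, means that one component is zero while the other two are not both strictly positive nor both strictly negative---precisely the condition in the statement. The only mild obstacle is verifying the slope pattern of $f$ (a short sign-table) and the corresponding translation of ``median equals zero'' into the combinatorial condition of the theorem; both steps are routine.
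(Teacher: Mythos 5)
Your argument is correct. The paper itself gives no proof of this statement (it is imported verbatim from the cited reference \cite{Stoj1}), so there is nothing internal to compare against; your reduction to minimising the convex piecewise-linear function $f(d)=|a+d|+|b+d|+|c+d|$ over the translation class $(a+d,b+d,c+d)$ is a clean, self-contained way to get it. The slope pattern $-3,-1,+1,+3$ does give a unique minimiser at $d=-\mathrm{med}\{a,b,c\}$ (consistent with the paper's corollary on uniqueness of the shortest path form), and ``$\mathrm{med}\{a,b,c\}=0$'' translates exactly to the stated condition, since the median of three integers is one of them. The only point you pass over silently is that an arbitrary path from $S$ to $D$ need not use each direction monotonically, so its length is only bounded below by $|a'|+|b'|+|c'|$ for the corresponding net address $(a',b',c')$; since that lower bound is what enters the minimisation, the identification of ``shortest path form'' with ``$d=0$ minimises $f$'' still holds, but it deserves one explicit sentence.
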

\begin{corollary}[\cite{Stoj1}]
Any address has a unique shortest path form.
\end{corollary}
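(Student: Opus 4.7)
The plan is to combine Theorem \ref{teo:short} with the equivalence already recorded in the text: two triples represent the same relative address if and only if they differ by a common integer shift, i.e.\ $(a',b',c') = (a+d,b+d,c+d)$ for some $d \in \mathbb{Z}$. So the statement reduces to showing that, among the one-parameter family of shifts of a fixed representative $(a,b,c)$, exactly one satisfies the characterization of a shortest-path form.

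First I would handle existence. Pick any representative and relabel the three coordinates (for the purpose of the argument only) so that $a\leq b\leq c$. Taking $d=-b$ gives $(a-b,0,c-b)$, which has a zero component and two remaining components satisfying $a-b\leq 0 \leq c-b$, so by Theorem \ref{teo:short} this shift is a shortest-path form.

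For uniqueness, suppose a different shift $(a+d,b+d,c+d)$ is also a shortest-path form. Theorem \ref{teo:short} forces one of the three shifted entries to vanish, hence $d\in\{-a,-b,-c\}$. If $d=-a$, the triple becomes $(0,b-a,c-a)$, whose two nonzero coordinates are both nonnegative; the sign condition of Theorem \ref{teo:short} therefore collapses one of them, and the ordering $a\leq b\leq c$ then forces $b=a$, so $d=-a=-b$ after all. The case $d=-c$ is symmetric and forces $d=-b$ as well. Hence the only shift producing a shortest-path form is $d=-b$, yielding the triple already constructed.

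The one point I would want to pin down, which is essentially the only delicate step, is the reading of "the two other components do not have the same sign" when one of those components happens to be zero. The natural interpretation, namely "not both strictly positive and not both strictly negative", is exactly the one consistent with the convexity picture underlying Theorem \ref{teo:short} (the piecewise-linear map $d\mapsto|a+d|+|b+d|+|c+d|$ is minimized precisely at the median $-b$), and with this reading both halves of the argument above go through without further obstacle.
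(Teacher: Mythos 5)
Your proof is correct. The paper itself imports this corollary from \cite{Stoj1} without supplying an argument, so there is no in-paper proof to compare against; your derivation --- reducing to the one-parameter family of common shifts $(a+d,b+d,c+d)$, exhibiting the median shift $d=-b$ as a shortest-path form, and ruling out the shifts $d=-a$ and $d=-c$ via the sign condition of Theorem~\ref{teo:short} --- is a complete and self-contained justification. You are also right to flag the only genuinely delicate point, the reading of ``do not have the same sign'' when a second component vanishes; your resolution via the convexity of $d\mapsto|a+d|+|b+d|+|c+d|$ (uniquely minimized at the median, since the slope jumps from negative to positive there) is consistent with the definition of shortest-path form given in the paper and in fact yields uniqueness directly.
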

Thus, each address $\overrightarrow{SD}$ written in the shortest path form
has at most two non-zero components, and they have different signs. In
fact, it is easy to find the shortest path form using the next result.
\begin{theorem}[\cite{Stoj1}]
If
$\overrightarrow{SD}=a\text{\textbf{\emph{i}}}+b\text{\textbf{\emph{j}}}+c\text{\textbf{\emph{k}}}$,
then
$$
|\overrightarrow{SD}|=\min(|a-c|+|b-c|,|a-b|+|b-c|,|a-b|+|a-c|).
$$
\end{theorem}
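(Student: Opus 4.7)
The plan is to rewrite each of the three expressions as the length of a representation of the same address obtained by a common shift of all three components, and then observe that the minimum among them corresponds to the unique shortest-path form whose length is $|\overrightarrow{SD}|$.

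First, I would use the relation $\mathbf{i} + \mathbf{j} + \mathbf{k} = 0$ (already exploited in the excerpt) to note that for any $d \in \mathbb{Z}$, the triples $(a,b,c)$ and $(a-d, b-d, c-d)$ represent the same vector $\overrightarrow{SD}$. Taking $d = a$, $d = b$, $d = c$ in turn yields the three alternative representations $(0, b-a, c-a)$, $(a-b, 0, c-b)$, $(a-c, b-c, 0)$, whose sums of absolute values of components equal $|a-b|+|a-c|$, $|a-b|+|b-c|$, and $|a-c|+|b-c|$ respectively. These are exactly the three terms appearing inside the $\min$ in the statement.

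Next, by Theorem~\ref{teo:short} and its corollary, the unique shortest-path form of $\overrightarrow{SD}$ has one component equal to $0$ and the other two of opposite signs, and its length $|\overrightarrow{SD}|$ is the sum of the absolute values of its components. The key step is to identify which of the three shifts above produces this shortest-path form. Let $m$ be the median of $\{a,b,c\}$; then the triple $(a-m, b-m, c-m)$ has its median entry equal to $0$, its minimum entry $\leq 0$, and its maximum entry $\geq 0$. By Theorem~\ref{teo:short} this is the shortest-path form, and its length equals $\max(a,b,c) - \min(a,b,c)$.

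It then remains to check that this value is indeed the minimum of the three candidates. By symmetry of the three expressions under permutations of the coordinates, I may assume WLOG that $a \leq b \leq c$; the three candidates then evaluate to $2c - a - b$, $c - a$, and $b + c - 2a$, and since $b \geq a$ and $c \geq b$ the middle one $c - a$ is clearly the smallest, matching the length computed above. The only mildly delicate point, and it is minor, is justifying the WLOG step — one must observe that relabeling the axes permutes the three expressions among themselves, so no generality is lost by ordering the components.
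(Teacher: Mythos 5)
Your argument is correct. Note that the paper does not prove this statement at all --- it is quoted from the reference \cite{Stoj1} as a known fact --- so there is no in-paper proof to compare against; your derivation is a legitimate self-contained reconstruction from the two ingredients the paper does supply, namely the shift relation induced by $\text{\textbf{\emph{i}}}+\text{\textbf{\emph{j}}}+\text{\textbf{\emph{k}}}=0$ and Theorem~\ref{teo:short}. The three shifts $d=a,b,c$ do yield exactly the three candidate sums, the median shift satisfies the criterion of Theorem~\ref{teo:short} (median entry $0$, the other two of opposite weak sign), and under $a\leq b\leq c$ the candidates $2c-a-b$, $c-a$, $b+c-2a$ differ from $c-a$ by the nonnegative quantities $c-b$ and $b-a$, so the minimum is attained precisely at the shortest-path form. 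The WLOG step is harmless for the even simpler reason that the set of three expressions, and hence their minimum, is a symmetric function of $(a,b,c)$; the only implicit step worth making explicit is that the length of an address in shortest-path form equals the sum of the absolute values of its components, which is immediate from the paper's definition of that form.
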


Permutation routing on full-duplex triangular grids has been
solved recently \cite{INOC07} attaining the distance lower bound
of $\ell_{\max}$ routing steps, where $\ell_{\max}$ is the maximum
length over the shortest paths of all packets to be sent through
the network.

As said in Remark \ref{obs:2}, if the network is half-duplex, one can
construct a 2-approximation algorithm from an optimal algorithm for the
full-duplex case by introducing \emph{odd-even} steps. Thus, using this
algorithm we obtain an upper bound of $2\ell_{\max}$ for half-duplex
triangular grids.

Let us show with an example that this na\"{i}ve algorithm is tight. That is,
we shall give an instance requiring at least $2\ell_{\max}$ running steps,
implying that no better algorithm for a general instance exists. Indeed,
consider a set of nodes distributed along a line on the triangular grid.
We fix $\ell_{\max}$ and an edge $e$ on this line, and put $\ell_{\max}$
packets at each side of $e$ along the line, at distance at most
$\ell_{\max}-1$ from an end-vertex of $e$. For each packet, each
destination is chosen on the other side of $e$ with respect to its origin,
at distance exactly $\ell_{\max}$ from the origin. It is easy to check that
the congestion of $e$ (that is, the number of shortest paths containing
$e$) is $2\ell_{\max}$, and thus any algorithm using shortest path routing
cannot perform in less than $2\ell_{\max}$ steps. On the other hand,
$\ell_{\max}$ is a lower bound for any distance, yielding that the
approximation ratio of our algorithm is at most $2$.

Previous observations allow us to state the next result:

\begin{theorem}
There exists a tight permutation routing algorithm for
half-duplex triangular grids performing in at most $2\ell_{\max}$ steps,
where $\ell_{\max}$ is the maximum length over the shortest paths
of all packets to be sent. This algorithm is a 2-approximation algorithm for a general instance.
\end{theorem}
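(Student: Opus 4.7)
My plan follows the structure sketched in the paragraph preceding the theorem, splitting the claim into an upper bound of $2\ell_{\max}$ steps, a matching worst-case lower bound, and the global $2$-approximation guarantee. For the upper bound, I would take the optimal full-duplex triangular-grid algorithm of \cite{INOC07}, which completes any permutation in $\ell_{\max}$ steps, and simulate it on a half-duplex grid using the odd/even-step construction of Remark \ref{obs:2}. Each full-duplex step $t$ is expanded into two half-duplex steps $2t-1$ and $2t$: during $2t-1$ we transmit only packets that would cross an edge in the direction of positive $\textbf{\emph{i}}$, $\textbf{\emph{j}}$, or $\textbf{\emph{k}}$, and during $2t$ only those going in the opposite direction. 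Since each edge is then used in at most one direction per half-step, the resulting schedule is legal in the half-duplex model, and its total length is exactly $2\ell_{\max}$. The simulation is purely local, so the distributed and oblivious nature of the underlying full-duplex algorithm is preserved.

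For the matching lower bound, I would make the instance suggested informally in the text fully explicit. Fix a line in direction $\textbf{\emph{i}}$ and a distinguished edge $e=uv$ on it. Place one packet at each of the $\ell_{\max}$ consecutive nodes ending at $u$ on one side, and one packet at each of the $\ell_{\max}$ consecutive nodes starting at $v$ on the other side. Pair each source $s$ on the $u$-side with the node on the $v$-side lying at distance exactly $\ell_{\max}$ from $s$, and symmetrically for the $v$-side. This yields a valid permutation, and by Theorem \ref{teo:short} the shortest path of every such packet is unique, runs along the line, and contains $e$. Hence the congestion of $e$ equals $2\ell_{\max}$; since in the half-duplex model $e$ can carry at most one packet per step, any shortest-path algorithm needs at least $2\ell_{\max}$ steps on this instance, matching the upper bound.

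The $2$-approximation statement then follows at once from the distance lower bound $\mathrm{OPT}\ge \ell_{\max}$ combined with the $2\ell_{\max}$ upper bound just established. The only slightly delicate verification I anticipate is checking that the pairing used in the lower-bound instance really produces a bijection between sources and destinations; this is immediate once the explicit correspondence on each side of $e$ is written out. All remaining ingredients are direct consequences of Remark \ref{obs:2}, Theorem \ref{teo:short}, and the full-duplex algorithm of \cite{INOC07}.
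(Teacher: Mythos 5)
Your proposal is correct and follows essentially the same route as the paper: the upper bound via the optimal full-duplex algorithm of \cite{INOC07} combined with the odd-even simulation of Remark \ref{obs:2}, the matching lower bound via the line instance whose distinguished edge has congestion $2\ell_{\max}$ under shortest-path routing, and the $2$-approximation from the distance bound $\mathrm{OPT}\ge\ell_{\max}$. Your version merely makes the pairing in the lower-bound instance and the parity rule of the half-duplex simulation explicit, which the paper leaves informal.
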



\subsection{Hexagonal grid}
\label{sec:hex} In a hexagonal grid one can define three types of
\emph{zigzag chains} \cite{honeycomb}, represented with thick
lines in \textsc{Fig.} \ref{fig:axis_hex}. Similarly to the
triangular grid, in the hexagonal grid any shortest path between
two nodes uses at most two types of zigzag chains
\cite{honeycomb}. Let us now give a lower bound for the running
time of any algorithm. Consider the edge labeled as $e$ in
\textsc{Fig.} \ref{fig:axis_hex}, and the two chains containing it
(those shaping an $X$). Fix $\ell_{\max}$ and $e$, and put one
message on all nodes placed at both chains at distance at most
$\ell_{\max}-1$ from an endvertex of $e$. As in the case of the
triangular grid, choose the destinations to be placed on the other
side of $e$ along the same zigzag chain than the originating node,
at distance exactly $\ell_{\max}$ from it. It is clear that all
the shortest paths contain $e$. It is also easy to check that the
congestion of $e$ is  $4\ell_{\max}-4$ in this case, constituted
of symmetric loads $2\ell_{\max}-2$ in each direction of $e$.
Thus, $2\ell_{\max}-2$ establishes a lower bound for the running
time of any algorithm in the full-duplex case, whereas
$4\ell_{\max}-4$ is a lower bound for the half-duplex case, under
the assumption of shortest path routing.

\begin{figure}[h!tb]
\begin{center}
\includegraphics[width=5.0cm]{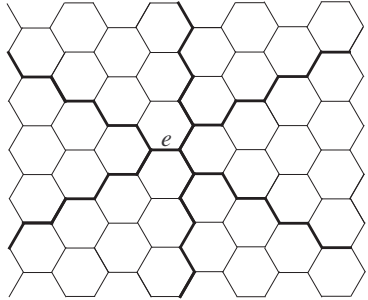}
\caption{Zigzag chains in a hexagonal grid.} \label{fig:axis_hex}
\end{center}
\end{figure}


Let us now describe a routing algorithm which reaches this bound. We have
three types of edges according to the angle that they form with any fixed
edge. Each edge belongs to exactly two different chains, and conversely
each chain is made of two types of edges. Moreover, in an infinite
hexagonal grid any two chains of different type intersect exactly on one
edge.

Given a pair of origin and destination nodes $S$ and $D$, it is
possible to express the relative address $D-S$ counting the number
of steps used by a shortest paths on each type of chain. In this
way we obtain an address $D-S=(a,b,c)$ on a generating system made
of unitary vectors following the directions of the three types of
chains (it is not a basis in the strict sense, since these vectors
are not linearly independent on the plane. However, we will call
it so). Choose this basis so that the three vectors form angles of
$120$ degrees among them. As it happens on the triangular grid
\cite{Stoj1,opt_hexnet_preprint}, there are at most two non-zero
components (see \cite{honeycomb}), and in that case they must have
different sign. Nevertheless, in this case, the address is not
unique, since an edge placed at the \emph{bent} (that is, a change
from a type of chain to another) of a shortest path is part of
both types of chains. Anyway, this ambiguity is not a problem in
the algorithm we propose, as we will see below.

Suppose first that edges are bidirectional or, said otherwise,
full-duplex. Roughly, the idea is to use the optimal algorithm for
triangular grids described in \cite{opt_hexnet_preprint}, and
adapt it to hexagonal grids. For that purpose we label the three
types of zigzag chains $c_1,c_2,c_3$, and the three types of edges
$e_1,e_2,e_3$. Without loss of generality, we label them in such a
way that $c_1$ uses edges of type $e_2$ and $e_3$, $c_2$ uses
$e_1$ and $e_3$, and $c_3$ uses $e_1$ and $e_2$ (see \textsc{Fig.}
\ref{fig:chains_edges}).

\begin{figure}[h!tb]
\begin{center}
\includegraphics[width=9cm]{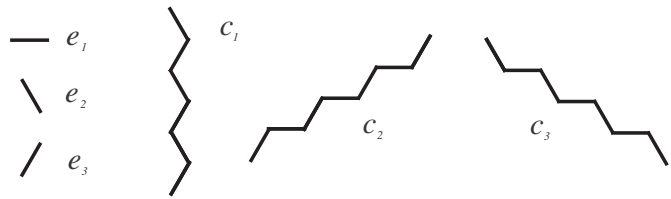}
\caption{3 types of chains and edges in a hexagonal grid.} \label{fig:chains_edges}
\end{center}
\end{figure}

For each type of edge, we define two phases according to the type of chain
that uses this type of edge. This defines two global phases, namely:
during Phase 1, $c_1$ uses $e_2$, $c_2$ uses $e_3$, and $c_3$ uses $e_1$.
Conversely, during Phase 2 $c_1$ uses $e_3$, $c_2$ uses $e_1$, and $c_3$
uses $e_2$.

We suppose that at each node packets are grouped into distinct queues
according to the next edge (according to the rules of the algorithm) along
its shortest path. Given the relative addresses $D-S$ in the form
$(a,b,c)$, the algorithm
can be described as follows. \\


At each node $u$ of the network:

\begin{itemize}
\item[\textbf{1)}] During the first step, move all packets along the
direction of their negative component. If a packet's address has
only a positive component, move it along this direction.
\item[\textbf{2)}] From now on, change alternatively between Phase 1 and Phase 2. At each step (the same for both phases):
\begin{itemize}
\item[\textbf{a)}] If there are packets with negative components, send them immediately along the direction of this component.
\item[\textbf{b)}] If not, for each outgoing edge order the packets
in decreasing number of remaining steps, and send the first packet of each
queue.
\end{itemize}
\item[\textbf{3)}] If at some point, all the packets in $u$ have
remaining distance one, send them immediately.\\
\end{itemize}

Let us analyze the correctness and optimality of this algorithm.

In \textbf{1)} all packets can move, since initially there is at
most one packet at each node. In \textbf{2a)}, there can only be
one packet with negative component at each outgoing edge
\cite{opt_hexnet_preprint}. In \textbf{2b)} \emph{the} packet with
maximum remaining length at each outgoing edge is unique. Indeed
all these packets are moving along their last direction (their
negative component is already finished, otherwise they would be in
\textbf{2a)}) and each node is the destination of at most one
packet. Hence, using this algorithm, every $2$ steps (one of Phase
1 and one of Phase 2) the maximum remaining distance over all
packets decreases by one. Moreover, during the first step all
packets decrease their remaining distance by one. Because of this,
after the $(2\ell_{\max}-3)th$ step
the maximum remaining distance has decreased at least
$1+\frac{2\ell_{\max}-4}{2}=\ell_{\max}-1$ times, hence the maximum
remaining distance is 1, and we are in \textbf{3)}. Since all destinations
are different, all packets can reach simultaneously their destinations.
Thus, the total running time is at most $2\ell_{\max}-3+1=2\ell_{\max}-2$,
meeting the \emph{worst case} lower bound. Again, $\ell_{\max}$ is a lower
bound for any instance, hence the algorithm constitutes a 2-approximation
for a general instance.


\begin{theorem}
\label{theo:hexfull} There exists a tight permutation routing
algorithm for full-duplex hexagonal grids performing in
$2\ell_{\max}-2$ steps, where $\ell_{\max}$ is the maximum length
over the shortest paths of all packets to be sent.
\end{theorem}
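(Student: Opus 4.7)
The plan has a matching lower bound and an upper bound, both foreshadowed in the text immediately before the theorem, so the work is mostly to organize them cleanly. For the lower bound of $2\ell_{\max}-2$, I would formalize the construction pictured above: fix an edge $e$ and the two zigzag chains through $e$ (those shaping an $X$), place one packet on every node of these two chains lying within distance $\ell_{\max}-1$ of an endvertex of $e$, and send each packet to the node at distance exactly $\ell_{\max}$ along its own chain on the opposite side of $e$. A simple count shows that each of the two chains contributes $2\ell_{\max}-2$ shortest paths through $e$, and these split symmetrically into $2\ell_{\max}-2$ paths wanting each orientation of $e$. Under full-duplex shortest-path routing this yields the claimed lower bound.

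For the upper bound I would analyze the algorithm described just above the theorem. Label the chain types $c_1,c_2,c_3$ and edge types $e_1,e_2,e_3$ so that $c_i$ uses the two edge types other than $e_i$; then in Phase~1 let $(c_1,c_2,c_3)$ use $(e_2,e_3,e_1)$ and in Phase~2 let them use $(e_3,e_1,e_2)$, so each chain type alternates between its two edge types and, symmetrically, each edge type hosts exactly one chain type per phase. The correctness proof then splits into three invariants, one per rule. Rule~(1) can move every packet because the instance is a permutation and each node starts with at most one packet. Rule~(2a) handles packets whose negative component is still non-zero; by the same argument as in the triangular-grid analysis of \cite{opt_hexnet_preprint}, at most one such packet queues on any outgoing edge during a given phase. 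Rule~(2b) breaks ties among the remaining packets (those with only positive components left) by sending the one with maximum remaining distance on each outgoing edge; since destinations are distinct and such a packet is committed to its final direction, this head-of-queue is unique per outgoing edge.

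The runtime accounting is then straightforward: rule~(1) decreases the maximum remaining distance by~$1$ in the very first step, and every Phase-1/Phase-2 pair thereafter also decreases it by~$1$, because the two phases together activate both edge types serving each chain. After $1 + 2(\ell_{\max}-2) = 2\ell_{\max}-3$ steps the maximum remaining distance is $1$, so rule~(3) delivers all packets simultaneously (again using distinct destinations), for a total of $2\ell_{\max}-2$ steps, matching the lower bound.

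The main obstacle will be the clean verification of the no-collision property in rule~(2b) at the \emph{bent} vertices, where a packet switches from one chain type to another and its address representation in the non-linearly-independent basis is not unique. The text sidesteps this by remarking that the ambiguity is harmless, but turning that remark into a formal invariant requires carefully tracking, at each step, which chain type each packet is currently using, so that the ``at most one head-of-queue per outgoing edge'' property truly holds and no two packets originating from different chains collide when they happen to share an edge type in the current phase.
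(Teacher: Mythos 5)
Your proposal follows essentially the same route as the paper: the identical $X$-shaped two-chain construction giving the $2\ell_{\max}-2$ congestion lower bound in each direction of $e$, the same two-phase algorithm with rules (1), (2a), (2b), (3), and the same runtime accounting ($1+2(\ell_{\max}-2)$ steps to reach remaining distance $1$, plus one final step). Your closing remark about the bent vertices correctly identifies the one point the paper also only addresses informally, but the overall argument matches the paper's proof.
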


\begin{observation}
The optimality stated in Theorem \ref{theo:hexfull} is true only under the
assumption of shortest path routing. This means that for certain traffic
instances the total deliver time may be shorter if some packets do not go
through their shortest path. To illustrate this phenomenon, consider the
example of \textsc{Fig.} \ref{fig:not_shortest}. Node labeled $i$ wants to
send a message to node labeled $i'$, for $i=1,\ldots,8$. We have that
$\ell_{\max}=5$, and thus our algorithm performs in $2\ell_{\max}-2=8$
steps. It is clear that all shortest paths use edge $e$, and its
congestion bottlenecks the running time. Suppose now that we route the
messages originating at even nodes through the path defined by the edges
$\{a,b,c,d\}$, instead of $\{f,e\}$, and keep the shortest path routing for
messages originating at odd nodes. One can check that with this routing
only $7$ steps are required.

\begin{figure}[h!tb]
\begin{center}
\includegraphics[width=6cm]{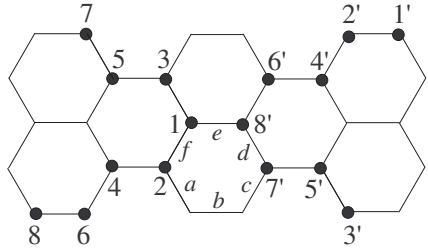}
\caption{Shortest path is not always the best choice.} \label{fig:not_shortest}
\end{center}
\end{figure}
\end{observation}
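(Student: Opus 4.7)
The Remark makes two verifiable claims about the $8$-packet instance of \textsc{Fig.}~\ref{fig:not_shortest}: (i) the shortest-path algorithm of Theorem~\ref{theo:hexfull} requires $8$ steps on this instance, and (ii) the modified routing (even packets rerouted along $\{a,b,c,d\}$ instead of $\{f,e\}$, odd packets unchanged) finishes in $7$ steps. My plan is to dispatch (i) by invoking Theorem~\ref{theo:hexfull} together with a matching congestion lower bound, and to prove (ii) by exhibiting an explicit $7$-step schedule.

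For (i), I would read off from the figure that each source-destination pair $(i,i')$ has graph distance $5$ in the hexagonal grid, so $\ell_{\max}=5$ and Theorem~\ref{theo:hexfull} gives the upper bound $2\ell_{\max}-2 = 8$. The matching lower bound is supplied by the edge $e$ itself: every shortest $i\to i'$ path crosses $e$, and in the full-duplex model the $8$ packets produce $4$ traversals of $e$ in each direction, so at least $8$ steps are necessary under any shortest-path routing. This shows that $8$ is attained, not just upper-bounded.

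For (ii), the first step is to compute the dilation and congestion of the new path system. Replacing the two-edge segment $\{f,e\}$ by the four-edge detour $\{a,b,c,d\}$ lengthens each even packet's route from $5$ to $7$, while the odd packets still travel along shortest paths of length $5$, so $D=7$; and each detour edge is now used by at most the four even packets while $e$ is used by at most the four odd packets, so $C\le 7$. Having $C,D\le 7$ is necessary but not sufficient, so one must actually build a schedule of length $7$. The construction I would try is to pipeline the four odd packets through $\{f,e\}$, sending exactly one through $e$ per step so that they clear $e$ in four consecutive steps inside the $7$-step window, and to launch the even packets simultaneously along the detour in a staggered order chosen so that, at the two nodes where the detour branches off from and rejoins the shortest-path subgraph, contention with the odd-packet stream is resolved by at most one unit of waiting per packet.

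The only nontrivial point is this scheduling at the detour endpoints: with only $7-5=2$ units of total slack for any packet, I must check that an assignment of start times exists under which no packet ever waits more than twice and under which odd and even packets never simultaneously request the same directed edge. Since only eight packets and a bounded region of the grid are involved, this reduces to a small finite case analysis on the figure; once that analysis is carried out, the $7$-step schedule is explicit and the Remark is established. The main (and only real) obstacle is this endpoint bookkeeping, which I expect to be routine but essential to record explicitly.
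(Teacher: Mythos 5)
There is a genuine gap in your justification of claim (i), the lower bound of $8$ steps for shortest-path routing. You assert that the eight packets produce four traversals of $e$ in each direction and conclude that ``at least $8$ steps are necessary.'' In the full-duplex model of Theorem \ref{theo:hexfull} an edge carries one packet in \emph{each} direction per step, so a load of four in each direction yields a congestion lower bound of only $4$; combined with the distance bound of $5$ this leaves open the possibility of a shortest-path schedule in fewer than $8$ steps, and the whole point of the remark would collapse. For the congestion of $e$ to ``bottleneck the running time'' at $8$, as the remark asserts, all eight shortest paths must cross $e$ in the \emph{same} direction (the even packets reaching $e$ via $f$, the odd ones via the other edge at the tail of $e$), giving a directed load of $8$ on the arc. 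Either your reading of the instance or your arithmetic must be corrected before part (i) is established; as written, the inference is a non sequitur.

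On claim (ii) your outline is sound in spirit: you correctly observe that having $C,D\le 7$ is necessary but not sufficient, and that an explicit $7$-step schedule must be exhibited. However, you stop exactly where the paper does (``one can check''), deferring the finite case analysis at the two nodes where the detour leaves and rejoins the shortest-path subgraph. Since that scheduling argument is essentially the entire content of the claim, the proposal as written reduces the remark to the same unverified computation the paper leaves to the reader rather than proving it; to complete the argument you would need to list the start times and per-step edge assignments for all eight packets and check the absence of directed-edge conflicts.
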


In the half-duplex case, just introduce again odd-even steps in both
phases. Thus, we have Phase 1-even, Phase 1-odd, Phase 2-even, and Phase
2-odd, which take place sequentially. Now, \textbf{1)} consists obviously
of two steps (even/odd). Using this algorithm, every $4$ steps the maximum
remaining distance decreases by one. In addition, during the first $2$
steps and during the last $2$ steps all packets decrease their remaining
distance by one. Thus, the total running time is at most twice the time of
the full-duplex case, that is $2(2\ell_{\max}-2)=4\ell_{\max}-4$ steps,
meeting again the lower bound for the running time of any routing
algorithm using shortest path routing. Again, this algorithm constitutes a
4-approximation for a general instance.

\begin{theorem}
\label{theo:hexhalf} There exists an tight permutation routing algorithm
for half-duplex hexagonal grids performing in $4\ell_{\max}-4$ steps, where
$\ell_{\max}$ is the maximum length over the shortest paths of all packets
to be sent.
\end{theorem}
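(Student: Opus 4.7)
The plan is to derive this result by combining the full-duplex algorithm of Theorem~\ref{theo:hexfull} with the \emph{odd-even} doubling trick of Remark~\ref{obs:2}, and then to match the $4\ell_{\max}-4$ lower bound already established in the paragraph preceding Theorem~\ref{theo:hexfull}. Concretely, I would replace each step of the full-duplex algorithm by two consecutive half-duplex steps labelled \emph{even} and \emph{odd}, producing four sub-phases Phase 1-even, Phase 1-odd, Phase 2-even, Phase 2-odd that cycle through the execution. The rule for which direction of an edge is used in each sub-phase is inherited from the full-duplex assignment, with the added parity tag ensuring that the two arcs between adjacent nodes are never activated simultaneously; hence the resulting schedule is a legal half-duplex one.

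For the upper bound I would recycle the bookkeeping from the full-duplex analysis with all time quantities doubled. Every pair of full-duplex steps (one of Phase 1 and one of Phase 2) decreases the maximum remaining distance by one, and each such pair becomes four half-duplex steps. The initial full-duplex step \textbf{1)} becomes two half-duplex steps whose combined effect still decreases every packet's remaining distance by one. The same arithmetic as in the proof of Theorem~\ref{theo:hexfull} then shows that after $2(2\ell_{\max}-3)$ half-duplex steps the maximum remaining distance is $1$, and step \textbf{3)} (delivered in two half-duplex substeps because destinations are distinct and opposite-direction deliveries across the same edge can be split across the even and odd slots) completes the routing within the $2(2\ell_{\max}-2) = 4\ell_{\max}-4$ budget.

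For the matching lower bound, I would invoke the instance already constructed around Figure~\ref{fig:axis_hex}: placing one packet at each node within distance $\ell_{\max}-1$ from an endvertex of the distinguished edge $e$, along the two zigzag chains meeting at $e$, with destinations chosen symmetrically on the opposite side at distance exactly $\ell_{\max}$, produces congestion $4\ell_{\max}-4$ on $e$. In the half-duplex case $e$ can carry at most one packet per step, so any shortest-path algorithm needs at least $4\ell_{\max}-4$ steps. Tightness follows by comparing the two bounds, and the $4$-approximation ratio on a general instance is immediate because $\ell_{\max}$ itself is a trivial lower bound on the routing time of any instance.

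The main obstacle I expect is the careful verification that the odd-even interleaving is conflict-free on every arc during every sub-phase. This reduces to checking that, within each of the four sub-phases, each arc is requested by at most one packet, exactly as in the full-duplex argument restricted to the corresponding direction. Once this bookkeeping is set up, correctness and the running-time bound are inherited directly from the proof of Theorem~\ref{theo:hexfull}, and there is no additional non-trivial content.
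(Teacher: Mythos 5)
Your proposal is correct and follows essentially the same route as the paper: doubling the full-duplex algorithm of Theorem~\ref{theo:hexfull} via odd-even sub-phases (Phase 1-even, 1-odd, 2-even, 2-odd) to get the $2(2\ell_{\max}-2)=4\ell_{\max}-4$ upper bound, and matching it with the congestion-$(4\ell_{\max}-4)$ instance on the two zigzag chains crossing at $e$ already constructed before Theorem~\ref{theo:hexfull}. The only difference is that you spell out the conflict-freeness of the odd-even interleaving, which the paper delegates to Remark~\ref{obs:2}.
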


\begin{observation}
As explained in  \ref{sec:embed}, there exists an embedding of the
triangular grid into the hexagonal grid with load, dilation, and
congestion $2$. Using this embedding, any algorithm performing on $k$
steps on the triangular gird performs on $2k$ steps on the hexagonal grid.
Using this fact, we obtain a permutation routing algorithm on full-duplex
hexagonal grids performing on $2\ell_{\max}$ steps. Note that the optimal
result given in Theorem \ref{theo:hexfull} is slightly better.

The same applies to half-duplex hexagonal networks, with a running time of
$4\ell_{\max}$ using the embedding, in comparison to $4\ell_{\max}-4$ steps
given by Theorem \ref{theo:hexhalf}.
\end{observation}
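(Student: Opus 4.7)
The plan is to invoke the embedding $\phi$ of the triangular grid into the hexagonal grid constructed in Section \ref{sec:embed}. Concretely, $\phi$ assigns to each triangular vertex $v$ a hexagonal vertex $\phi(v)$, with at most two preimages at any hexagonal vertex (load $2$), and to each triangular edge $e=uv$ a hexagonal path $P_e$ from $\phi(u)$ to $\phi(v)$ of length at most $2$ (dilation $2$), in such a way that every hexagonal edge lies on at most two of the paths $P_e$ (congestion $2$).

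The heart of the argument is a one-step simulation lemma: a single step of any packet routing algorithm on the triangular grid can be reproduced by exactly two consecutive steps on the hexagonal grid. Given a triangular step in which each packet traverses some triangular edge $e$, the simulation moves the packet along the first hexagonal edge of $P_e$ in the first substep and along the second hexagonal edge of $P_e$ (if any) in the second substep; packets whose path $P_e$ has length $1$ simply idle during the second substep. Concatenating these substep pairs over all $k$ triangular steps yields a hexagonal algorithm of length $2k$, while the load-$2$ condition ensures that the buffer discipline at each hexagonal vertex stays within the admitted bound.

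The main obstacle is ensuring that no hexagonal edge is overloaded inside a single substep. By the congestion-$2$ property, any hexagonal edge $h$ lies on at most two paths $P_{e_1}$ and $P_{e_2}$. Using the explicit structure of $\phi$ described in Section \ref{sec:embed}, I would verify that whenever both paths are active in the same triangular step, they can be consistently oriented so that $h$ appears as the \emph{first} hexagonal edge of one path and the \emph{second} hexagonal edge of the other; together with the full-duplex capacity of $h$, this separates the two potential crossings either across the two substeps or across the two directions of $h$. The same scheduling, interleaved with the odd--even substeps of Remark \ref{obs:2}, separates the crossings purely in time in the half-duplex case, so the simulation factor of $2$ is preserved in both duplex regimes.

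The observation then follows by direct substitution into the simulation lemma. The optimal full-duplex triangular routing of \cite{INOC07} runs in $\ell_{\max}$ steps, whose simulation produces $2\ell_{\max}$ steps on the full-duplex hexagonal grid. Similarly, the $2\ell_{\max}$-step half-duplex triangular algorithm of Section \ref{sec:tri} simulates in $4\ell_{\max}$ steps on the half-duplex hexagonal grid. Both quantities exceed the bounds of Theorems \ref{theo:hexfull} and \ref{theo:hexhalf} by the small additive constants $2$ and $4$ respectively, confirming the comparison stated at the end of the observation.
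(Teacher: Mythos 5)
Your overall strategy---pull the instance back to the triangular grid, run the $\ell_{\max}$-step algorithm of \cite{INOC07} there, and simulate each triangular step by two hexagonal substeps along the dilation-$2$ paths---is exactly the route the paper intends; the paper itself offers nothing beyond the bare assertion that a $k$-step guest algorithm becomes a $2k$-step host algorithm, so your write-up is if anything more explicit. But the one step you flag as ``to be verified'' is precisely where the argument is incomplete, and it cannot be discharged the way you propose. Congestion $2$ alone does not give a two-substep simulation: if the two paths $P_{e_1}$ and $P_{e_2}$ through a shared hexagonal edge $h$ both need $h$ as their \emph{first} edge, in the \emph{same} direction, during the same triangular step, then no assignment of substeps avoids the collision; and in the half-duplex case even opposite directions collide, so the appeal to the two directions of $h$ is unavailable there. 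The staggering property therefore has to be checked against a concrete embedding. The object you appeal to for that check, however, does not exist in the paper: Section \ref{sec:embed} constructs only \emph{square2triangle} and \emph{square2hexagon}; no embedding of the triangular grid into the hexagonal grid is ever exhibited, so there is no ``explicit structure of $\phi$'' to inspect. This is a defect of the paper as much as of your proposal, but it means your plan as written cannot be completed from the cited material.

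A smaller point: the $\ell_{\max}$ in the conclusion is the maximum shortest-path length measured in the \emph{hexagonal} grid, whereas the triangular algorithm runs in a number of steps governed by distances in the \emph{triangular} grid. Dilation $2$ gives $d_{\mathrm{hex}}\leq 2\, d_{\mathrm{tri}}$, which is the wrong direction; the substitution producing ``$2\ell_{\max}$'' silently uses $d_{\mathrm{tri}}\leq d_{\mathrm{hex}}$, which holds for the natural (unspecified) embedding but is not implied by load, dilation and congestion alone and should be stated. None of this affects the final comparison with Theorems \ref{theo:hexfull} and \ref{theo:hexhalf}, which you reproduce correctly.
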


\section{$(1,any)$-Routing}
\label{sec:any} In this case the routing model is the following:
each packet has at most one packet to send, but there are no
constraints on the destination. That is, in the worst case all
packets can be sent to one node. This special case where all
packets want to send a message to the same node in often called
\emph{gathering} in the literature \cite{gathering_radio}. Notice
that this routing model is conceptually different from the
$(1,k)$-routing, where the maximum number of packets that a node
can receive is fixed \emph{a priori}.

\paragraph{Square grid}
Assume first that edges are bidirectional. The modifications for the
half-duplex case are similar to those explained in the previous section.

We will focus on the case where all packets surrounding a given vertex
want to send a packet to that vertex. We call this situation
\emph{central} routing, and if we want to specify that all nodes at
distance at most $r$ from the center want to send a packet, we note it as
\emph{$r$-central} routing. Note that this situation is realistic in many
practical applications, since the central vertex can play the role of a
router or a gateway in a local network.

\begin{lemma}[Lower Bound]
The number of steps required in a $r$-central routing is at least ${r+1
\choose 2}$.
\end{lemma}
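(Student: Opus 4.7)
The plan is to apply the bisection (equivalently, the congestion) bound of Section~\ref{sec:bounds} to the four edges incident to the central node $v$ in the square grid. Since every packet in an $r$-central routing instance has $v$ as its destination, all such packets must cross one of these four edges exactly once on their way to $v$, and each such edge can carry at most one packet toward $v$ per step.

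First I would count the packets. In the square grid with $\ell_1$-distance, the sphere of radius $i \geq 1$ around $v$ consists of exactly $4i$ nodes (the four diagonal rays of $i$ nodes each). Summing over $i = 1, \ldots, r$, the total number of sources is
\[
\sum_{i=1}^{r} 4i \;=\; 4 \cdot \frac{r(r+1)}{2} \;=\; 2r(r+1).
\]
Next, I would take $F$ to be the cut-set consisting of the four edges incident to $v$, which disconnects $G$ into $G_1 = \{v\}$ and $G_2 = V \setminus \{v\}$. All $2r(r+1)$ packets have their origin in $G_2$ and their destination in $G_1$, so the bisection bound yields a lower bound of
\[
\left\lceil \frac{2r(r+1)}{4} \right\rceil \;=\; \frac{r(r+1)}{2} \;=\; \binom{r+1}{2}
\]
on the total number of steps (the ceiling is superfluous because $r(r+1)$ is always even).

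I do not expect any real obstacle here: the proof is essentially a head-on application of the bisection bound, and the only small check is the vertex count on spheres around $v$, which is immediate from the geometry of the $\ell_1$-ball in $\mathbb{Z}^2$. The only subtlety worth mentioning explicitly is that the argument is independent of the routing strategy (shortest path or not) and of the duplex model, since even in the full-duplex case each of the four incident edges can deliver at most one packet to $v$ per step.
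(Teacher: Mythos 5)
Your proposal is correct and is essentially identical to the paper's own proof: both count the $2r(r+1)=4\binom{r+1}{2}$ sources within distance $r$ of the center and apply the bisection bound to the cut formed by the four edges incident to the central vertex.
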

\begin{proof}
Let us use the bisection bound \cite{survey_routing98} to prove
the result. It is easy to count the number of points at distance
at most $r$ from the center, which is $4{r+1 \choose 2}$. Now
consider the cut consisting of the four edges outgoing from the
central vertex. All packets must traverse one of these edges to
arrive to the central vertex. This cut gives the bisection bound
of $4{r+1 \choose 2}/4$ routing steps.
\end{proof}
Let us now describe an algorithm meeting the lower bound.
\begin{proposition}
\label{prop:optcentral} There exists an optimal $r$-central routing
algorithm on square grids performing in ${r+1 \choose 2}$ routing steps.
\end{proposition}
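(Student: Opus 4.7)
The plan is to build an oblivious, quadrant-symmetric algorithm that pipelines each packet along a shortest path so that each of the four edges incident to $v$ carries exactly one packet per step throughout the whole routing. Placing $v$ at the origin, I use the $90^\circ$ rotational symmetry of the square grid to partition the sources into four congruent quadrants and analyze only the \emph{NE quadrant} $Q=\{(i,j): i\ge 0,\ j\ge 1,\ i+j\le r\}$. A direct count gives $|Q|=\sum_{j=1}^{r}(r-j+1)=\binom{r+1}{2}$, so it suffices to deliver every packet of $Q$ to $v$ through the single edge $e_N$ joining $(0,1)$ and $(0,0)$ in $\binom{r+1}{2}$ steps. Inside $Q$ each packet follows the ``west-then-south'' shortest path $(i,j)\to(i-1,j)\to\cdots\to(0,j)\to(0,j-1)\to\cdots\to(0,0)$, which funnels every packet of $Q$ through $e_N$.

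For the schedule, I order the packets of $Q$ by the lexicographic key $(i+j,\,i)$ and assign to the source $(i,j)$ the rank
$$
k(i,j)\ :=\ \binom{i+j}{2}+i+1 \ \in\ \{1,\ldots,\binom{r+1}{2}\}.
$$
I require the packet initially at $(i,j)$ to cross $e_N$ at step exactly $k(i,j)$; since its path has length $i+j$, the packet waits idly at its source until time $k(i,j)-(i+j)$ and then moves at every subsequent step. The elementary inequality $\binom{d}{2}+1\ge d$ for every $d\ge 1$ gives $k(i,j)\ge i+j$, so the start time is nonnegative and the prescribed arrival on $e_N$ is feasible.

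The main verification is that this schedule is conflict-free on every edge. Two packets of $Q$ can compete only on a west edge of a common row, say $((i,j),(i-1,j))$ with $i\ge 1$, in which case both sources must be of the form $(i'_\alpha,j)$ with $i'_\alpha\ge i$; or on a south edge $((0,j),(0,j-1))$ of column $0$, in which case both sources $(i'_\alpha,j'_\alpha)$ must satisfy $j'_\alpha\ge j$. Substituting the schedule yields respective crossing times $k(i'_\alpha,j)-j-i+1$ and $k(i'_\alpha,j'_\alpha)-j+1$, so distinctness reduces in both cases to $k$ being injective on $Q$, which is immediate. Across the four quadrants there is no conflict either, since the quadrant paths are pairwise edge-disjoint and meet only at $v$, which each quadrant enters through its own incident edge.

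Combining the four symmetric schedules, all $4\binom{r+1}{2}$ packets reach $v$ by step $\binom{r+1}{2}$, matching the bisection lower bound of the previous lemma. I expect the step requiring the most care in a fully formal write-up to be not the conflict analysis, which collapses to the injectivity of $k$ once the ordering is chosen, but the choice of the ordering itself: one needs a bijection $k\colon Q\to\{1,\ldots,\binom{r+1}{2}\}$ that also satisfies $k(i,j)\ge i+j$, and the triangular-number formula above is the cleanest I found.
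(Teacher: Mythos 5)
Your proof is correct, and at the top level it follows the same strategy as the paper: partition the $4\binom{r+1}{2}$ sources into four congruent regions by the symmetry of the grid, funnel each region through its own edge incident to $v$, and show that this edge carries one packet per step so that the running time equals its congestion $1+2+\cdots+r=\binom{r+1}{2}$, matching the bisection lower bound. The realizations differ in two ways. First, the paper's regions are cut by the sign of $ab$ (open quadrants routed perpendicular-first onto the four axis rays, then along the ray into $v$), whereas yours are half-open quadrants routed west-then-south; both give regions of size exactly $\binom{r+1}{2}$. Second, and more substantively, the paper uses a greedy local rule (``packets with greater remaining distance have priority'') and simply asserts that ``at each step one packet reaches its destination along each line,'' i.e., that the pipeline on the final edge never stalls; you instead prescribe an explicit crossing time $k(i,j)=\binom{i+j}{2}+i+1$ for the edge at $v$, check feasibility via $k(i,j)\ge i+j$, and reduce all edge conflicts to the injectivity of $k$. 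Your version therefore supplies the throughput verification that the paper leaves implicit, at the cost of a precomputed (though still locally computable and oblivious) schedule in place of the paper's purely greedy one.
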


\begin{proof}
Express each node address in terms of the relative address with respect to
the central vertex. In this way each node is given a label $(a,b)$. Then,
for each packet placed in a node with label $(a,b)$ our routing algorithm
performs the following:
\begin{itemize}
\item[$\bullet$] If $ab=0$, send the packet along the direction of the
non-zero component.
\item[$\bullet$] If $ab>0$, send the packet along the vertical axis.
\item[$\bullet$] If $ab<0$, send the packet along the horizontal axis.
\end{itemize}
Queues are managed so that the packets having greater remaining distance
have priority.

This routing divides the square grid into 4 subregions surrounding the
central vertex, as shown in \textsc{Fig.} \ref{fig:squarediv}. The type of
routing performed in each subregion is symbolized by an arrow.

\begin{figure}[h!tb]
\begin{center}
\includegraphics[width=4.5cm]{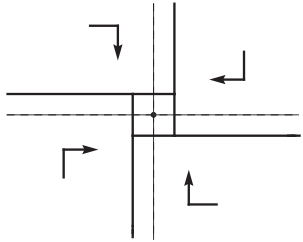}
\caption{Division of the grid in the proof of Proposition \ref{prop:optcentral}.}
\label{fig:squarediv}
\end{center}
\end{figure}

Let us now compute the running time in the $r$-central case. It is obvious
that using this algorithm all packets are sent to the 4 axis outgoing from
the central vertex. The congestion of the edge in the axis containing the
central vertex along each line is $1+2+3+\ldots+r={r+1 \choose 2}$. Since
at each step one packet reaches its destination along each line, we
conclude that ${r+1 \choose 2}$ is the total running time of the
algorithm.
\end{proof}

\paragraph{Triangular grid}
The same idea  applies to the triangular grid. In this
case, the number of nodes at distance at most $r$ is $6{r+1 \choose 2}$.
The cut is made of $6$ edges. Dividing the plane onto 6 subregions gives
again an optimal algorithm performing in ${r+1 \choose 2}$ steps.
\paragraph{Hexagonal grid}

The same idea gives an optimal routing in the $r$-central case. In this
case the degree of each vertex is three, and then it is easy to check (maybe a
drawing using \textsc{Fig.} \ref{fig:axis_hex} can help) that there are
$3{r+1 \choose 2}$ nodes at distance at most $d$ that may want to send a
message to the central vertex, and the cut has size 3. As expected, the
running time is again ${r+1 \choose 2}$.


\section{$(\ell,k)$-Routing}
\label{sec:lk}

Recall that in the general $(\ell,k)$-routing problem each node sends
at most $\ell$ packets and receives at most $k$ packets. We propose a
distributed approximation algorithm using the ideas of the previous algorithms for the permutation routing problem. We also provide
lower bounds for the running time of any algorithm using shortest path
routing, that allow us to prove that our algorithm is tight when $\ell=k$,
on any grid.



We start by describing the results for full-duplex triangular grids. (The results can be adapted to square grids.)
We also show how to adapt the results to hexagonal grids and to the half-duplex version. In this section
we denote $c:=\PartIntSup{\frac{\max\{\ell,k\}}{\min\{\ell,k\}}}=\PartIntSup{\max\{\frac{\ell}{k},\frac{k}{\ell}\}}$. Note that $c\geq 1$. Lemma \ref{lem:LB1} and Lemma \ref{lem:LB2} provide two lower bounds for the running time of any algorithm using shortest paths.\\

\begin{lemma}[First lower bound]
The worst-case running time of any algorithm for $(\ell,k)$-routing on
full-duplex triangular grids using shortest path routing satisfies
\label{lem:LB1}
$$\text{Running time}\geq  \min\{\ell,k\}\cdot\ell_{\max}$$
\end{lemma}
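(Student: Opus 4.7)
I would prove the lower bound by a congestion argument, constructing a single worst-case $(\ell,k)$-routing instance on a straight chain of the triangular grid. The distance lower bound alone gives only $\ell_{\max}$, so the new ingredient must come from loading an edge with a number of paths that scales with $\min\{\ell,k\}$.

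First I would pick one of the three axes of the triangular grid (in the basis $\{\mathbf{i},\mathbf{j},\mathbf{k}\}$ of Fig.~\ref{fig:2arcs_axis}b) and an edge $e=(u,v)$ on it. I would label by $S_0=u,S_1,\dots,S_{\ell_{\max}-1}$ the nodes lying on that axis to the left of $e$ at distances $0,1,\dots,\ell_{\max}-1$ from $u$, and symmetrically $D_0=v,D_1,\dots,D_{\ell_{\max}-1}$ on the right of $e$ at distances $0,1,\dots,\ell_{\max}-1$ from $v$. Then I would define the instance by making each $S_i$ send exactly $\min\{\ell,k\}$ packets, all directed to $D_{\ell_{\max}-1-i}$. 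Checking the routing constraints is immediate: each node is the origin of exactly $\min\{\ell,k\}\le\ell$ packets, each node is the destination of exactly $\min\{\ell,k\}\le k$ packets, and the origin/destination sets are disjoint because $u\neq v$.

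Next I would verify that every packet travels on a shortest path of length exactly $\ell_{\max}$ through $e$. The relative address between $S_i$ and $D_{\ell_{\max}-1-i}$ is of the form $(m,0,0)$ with $|m|=\ell_{\max}$ in the basis aligned with the chosen axis, which is already in the shortest path form of Theorem~\ref{teo:short}, and by the uniqueness corollary the only shortest path between two points collinear along one axis is the straight chain segment joining them. In particular this segment crosses the arc $u\!\to\! v$ for every one of the $\ell_{\max}$ source-destination pairs. Hence in any shortest-path routing strategy, the directed arc from $u$ to $v$ must carry all $\min\{\ell,k\}\cdot\ell_{\max}$ packets, and since an arc can forward at most one packet per time step (even in the full-duplex model, where the two directions are independent but each is of unit capacity), the running time must be at least $\min\{\ell,k\}\cdot\ell_{\max}$.

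The only real obstacle I anticipate is the uniqueness of the shortest path, since on the triangular grid two nodes can in general admit many shortest paths; this is why I would insist on choosing $S_i$ and $D_j$ collinear on an axis, so that the shortest-path-form address has a single nonzero component and the path is forced onto the axis edge $e$. Everything else reduces to counting: $\ell_{\max}$ source-destination pairs, $\min\{\ell,k\}$ packets per pair, all using the same arc, giving the stated bound.
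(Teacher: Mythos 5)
Your proposal is correct and follows essentially the same route as the paper: both construct a worst-case instance on a line of the triangular grid with $\ell_{\max}$ sources on one side of a distinguished edge $e$, each sending $\min\{\ell,k\}$ packets to matched destinations at distance exactly $\ell_{\max}$ on the other side, and then conclude by the congestion of $e$. Your version just spells out the pairing $S_i \mapsto D_{\ell_{\max}-1-i}$ and the uniqueness of the axis-aligned shortest path more explicitly than the paper does.
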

\begin{proof}
Consider a set of $\ell_{\max}$ nodes placed along a line, placed
consecutively at one side of a distinguished edge $e$. Each node wants to
send $\min\{\ell,k\}$ messages to the nodes placed at the other side of
$e$ along the line, at distance $\ell_{\max}$ from it. Then the congestion
of $e$ is $\min\{\ell,k\}\cdot\ell_{\max}$, giving the bound.
\end{proof}


\begin{definition}
Given a vertex $v$, we call the rectangle of side $(a,b)$ starting at $v$
the set $R^v_{a,b}=\{v+ \alpha\text{\emph{\textbf{i}}} + \beta
\text{\emph{\textbf{j}}}, 0\leq \alpha < a, 0\leq \beta < b\}$. We call
such a rectangle a square if $a=b$. Notice that in the triangular grid the
node set is generated by
$\{\emph{\textbf{i}},\emph{\textbf{j}},\emph{\textbf{k}}\}$, where
$\emph{\textbf{k}}= -\emph{\textbf{i}} - \emph{\textbf{j}}$, as we have
explained in Section \ref{sec:tri}.
\end{definition}

Using standard graph terminology, given a graph $G=(V,E)$ and a
subset $S \subseteq V$, the set $\Gamma(S)$ denotes the (open)
neighborhood in $G$ of the vertices in $S$. The following theorem
can be found, for example, in \cite{art95}.
\begin{theorem}[Corollary of Hall's theorem \cite{art95}]
\label{teo:cor_hall} Let $G=(V,E)$ be a bipartite graph, with $V = X \cup
Y$. If for all subsets $A$ of $X$, $|\Gamma(A)| \geq c|A|$, then for each
$x\in X$, there exists $S_x \subset Y$ such that $|S_x|=c$, and $\forall
x, x' \in X$, $S_x \cap S_{x'} = \emptyset$ and $S_x \subset \Gamma(x)$.
\end{theorem}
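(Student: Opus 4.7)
The plan is to reduce the claim to the ordinary Hall marriage theorem by a standard ``copy'' construction. Intuitively, we want to assign to each $x\in X$ a pack of $c$ distinct partners from $\Gamma(x)$, with all packs pairwise disjoint across $X$; this is exactly a perfect matching from $X$ to $Y$ after replacing each $x\in X$ by $c$ copies.

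Concretely, I would build an auxiliary bipartite graph $G'=(X'\cup Y, E')$, where $X'$ contains $c$ copies $x^{(1)},\dots,x^{(c)}$ of every $x\in X$, and every copy $x^{(i)}$ is joined in $G'$ to exactly the vertices of $\Gamma_G(x)\subseteq Y$. The goal is to produce a matching in $G'$ that saturates $X'$; if such a matching $M$ exists, define $S_x$ to be the set of $c$ vertices of $Y$ that $M$ pairs with the $c$ copies of $x$. By construction $|S_x|=c$, $S_x\subseteq \Gamma_G(x)$, and the sets $\{S_x\}_{x\in X}$ are pairwise disjoint, which is precisely what the theorem asks for.

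The remaining task is to verify that $G'$ satisfies Hall's condition for a matching saturating $X'$, i.e.\ $|\Gamma_{G'}(A')|\ge |A'|$ for every $A'\subseteq X'$. Let $A\subseteq X$ be the projection of $A'$ onto the original vertex set, i.e.\ the set of $x\in X$ having at least one copy in $A'$. Since copies of the same $x$ have identical neighborhoods in $Y$, we have $\Gamma_{G'}(A')=\Gamma_G(A)$. On the other hand, each original vertex contributes at most $c$ copies to $X'$, so $|A'|\le c|A|$. Combining with the hypothesis $|\Gamma_G(A)|\ge c|A|$ yields
\[
|\Gamma_{G'}(A')|=|\Gamma_G(A)|\ge c|A|\ge |A'|,
\]
establishing Hall's condition. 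Applying Hall's theorem to $G'$ then produces the desired matching, and the conclusion follows.

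I do not expect any real obstacle: the reduction is routine once one sees the copy construction, and the only thing to check carefully is that the $c$-fold blow-up of $X$ behaves neighborhood-wise as described, which is immediate from the definition of $G'$. The main ``content'' of the corollary is therefore transported entirely into the standard Hall theorem cited in~\cite{art95}.
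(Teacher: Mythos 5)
Your proof is correct: the $c$-fold blow-up of $X$ followed by an application of the ordinary Hall marriage theorem is the standard argument for this corollary, and your verification of Hall's condition in the auxiliary graph (via $\Gamma_{G'}(A')=\Gamma_G(A)$ and $|A'|\le c|A|$) is exactly right. The paper itself gives no proof and simply cites the result from \cite{art95}, so there is no alternative argument to compare against; your reduction is the textbook one.
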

We use this theorem to prove the following lower bound.
\begin{lemma}[Second lower bound]
\label{lem:LB2} The worst-case running time of any algorithm for
$(\ell,k)$-routing on full-duplex triangular grids using shortest path
routing satisfies
$$
\text{Running time}\geq \PartIntSup{\frac{\max\{\ell,k\}}{4} \cdot \left
\lfloor \frac{\ell_{\max}+1}{\sqrt{c+1}} \right \rfloor }\ ,
$$
where $c=\PartIntSup{\frac{\max\{\ell,k\}}{\min\{\ell,k\}}}$.
\end{lemma}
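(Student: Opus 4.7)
My plan is to construct an adversarial $(\ell,k)$-routing instance in which many packets funnel through a small region, and then apply a bisection (congestion) bound. Without loss of generality assume $k\geq\ell$, so $c=\lceil k/\ell\rceil$ (the opposite case is symmetric, swapping the roles of sources and destinations). Set $a:=\lfloor(\ell_{\max}+1)/\sqrt{c+1}\rfloor$ and take the rhombus $R:=R^O_{a,a}$ of side $a$ in the triangular grid, with $|R|=a^2$. The instance will make each $v\in R$ the destination of exactly $k$ packets whose sources lie outside $R$ at distance $\leq\ell_{\max}$ of $v$, while each outside vertex originates at most $\ell$ packets, so the $(\ell,k)$-routing constraints are met and all shortest paths have length at most $\ell_{\max}$.

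Feasibility is established via Hall's theorem. Form the bipartite graph $H=(X\cup Y,E)$ with $X=R$, $Y=N_{\ell_{\max}}(R)\setminus R$, and $xy\in E$ iff $d(x,y)\leq\ell_{\max}$. The key subclaim to verify is
$$|\Gamma_H(A)|\geq c\cdot|A|\qquad\text{for every }A\subseteq X.$$
The tight case $A=R$ reduces to $|N_{\ell_{\max}}(R)|\geq(c+1)a^2$. Since $N_{\ell_{\max}}(R)$ is the Minkowski sum of $R$ with a radius-$\ell_{\max}$ ball of the triangular grid, a direct area estimate gives $|N_{\ell_{\max}}(R)|$ of order $(a+\ell_{\max})^2$ (up to constants from the hexagonal ball shape), and the required inequality is equivalent to $\ell_{\max}+a\geq a\sqrt{c+1}$, which is exactly what the definition of $a$ enforces. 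For a general $A\subseteq R$ the same bound follows from an isoperimetric-type estimate in the triangular grid (balls minimize $|\Gamma_H(A)|/|A|$), so the worst case is captured by $A=R$. Once Hall's condition holds, its corollary (Theorem~\ref{teo:cor_hall}) produces pairwise disjoint sets $S_x\subseteq\Gamma_H(x)$ with $|S_x|=c$; assigning each source in $S_x$ to send $\lceil k/c\rceil\leq\ell$ packets to $x$ (rounded so the total per destination is exactly $k$) produces the desired instance. Disjointness of the $S_x$'s guarantees that no outside vertex originates more than $\ell$ packets.

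The congestion lower bound is then pigeonhole: all $a^2k$ packets must enter $R$ via its boundary. The rhombus $R$ has approximately $4a$ incoming arcs on its four ``straight'' sides, namely those in the $\pm\mathbf i$ and $\pm\mathbf j$ directions, and by applying Hall's theorem instead to the restriction of $H$ to those four straight sectors one can arrange $S_x$ to lie entirely in them. The bisection bound (Section~\ref{sec:bounds}) then forces some such arc to carry at least $\lceil a^2 k/(4a)\rceil=\lceil ak/4\rceil$ packets, which is the claimed lower bound on the running time.

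The main obstacle is the uniform inequality $|\Gamma_H(A)|\geq c|A|$. For $A=R$ it is a direct area computation; for smaller $A\subseteq R$ one must invoke the isoperimetric property that balls are extremizers in the triangular grid, and simultaneously the $S_x$'s must be confined to the four straight sectors outside $R$ so that the effective cut has size $\sim 4a$ (rather than $\sim 8a-2$, which would only yield $ak/8$). The choice $a=\lfloor(\ell_{\max}+1)/\sqrt{c+1}\rfloor$ is exactly calibrated to make both aspects of this argument tight, explaining in particular the ``$+1$'' under the square root.
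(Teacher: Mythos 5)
Your overall strategy coincides with the paper's: pick a rhombus of side $d=\left\lfloor(\ell_{\max}+1)/\sqrt{c+1}\right\rfloor$, use the Hall-type Theorem~\ref{teo:cor_hall} to build a feasible $(\ell,k)$-instance whose paths all cross the rhombus boundary, and then apply the bisection bound. However, the two steps you yourself flag as ``the main obstacle'' are exactly the technical content of the lemma, and neither is actually established. First, the uniform expansion condition $|\Gamma_H(A)|\geq c|A|$ for \emph{every} $A\subseteq R$ does not follow from ``balls minimize the expansion ratio'': the quantity you need to bound is $|N_{\ell_{\max}}(A)\setminus R|/|A|$ for sets $A$ constrained to lie inside the rhombus, with the neighborhood truncated to the outside of $R$; a standard isoperimetric statement about balls says nothing about this constrained, truncated ratio, and your conclusion that the worst case is $A=R$ (a rhombus, not a ball) is not a consequence of the principle you invoke. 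The paper resolves this by an explicit reduction: an induction showing $|D_{R_A}\setminus D_A|\leq|R_A\setminus A|$ (so one may assume $A$ is a rectangle), a further reduction to rectangles anchored at the corner $v$, and then a direct computation of $|D_{R^v_{a,b}}|=(a+\ell_{\max})(b+\ell_{\max})-d^2$ for all $1\leq a,b<d$. Some argument of this kind is needed; without it the instance may simply not exist.

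Second, your cut argument does not deliver the constant $4$. In the full triangular grid the rhombus $R^v_{a,a}$ has $8a-2$ boundary arcs in a given orientation ($4a$ in the $\pm\mathbf{i},\pm\mathbf{j}$ directions plus $2(2a-1)$ in the $\pm\mathbf{k}$ directions), and placing sources in ``four straight sectors'' does not prevent shortest paths from entering $R$ through the $\pm\mathbf{k}$-direction arcs, since a shortest path whose address has two nonzero components of different signs may well terminate with a $\pm\mathbf{k}$ step. The paper sidesteps this by confining the entire construction to the single quadrant $S$ generated by $+\mathbf{i}$ and $+\mathbf{j}$: there the cut separating $R^v_{d,d}$ from its destinations has exactly $4d-1$ edges, and every shortest path between the square and the destination annulus is coordinate-monotone, hence stays in $S$ and crosses this cut exactly once, giving congestion at least $\left\lceil\ell d^2/(4d-1)\right\rceil>\left\lceil\ell d/4\right\rceil$. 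Restricting to one quadrant of course shrinks the neighborhoods, which is why the Hall verification must be done in that restricted graph --- the two gaps are coupled, and your sketch as written would only justify a denominator of $8$ rather than $4$.
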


\begin{proof}
Suppose without loss of generality that $\ell \geq k$, otherwise change the role of
$\ell$ and $k$. Let $v$ be a vertex, and consider the square
$R^v_{d,d}$, with $d:= \left \lfloor \frac{\ell_{\max}+1}{\sqrt{c+1}}
\right \rfloor$. We claim that all nodes inside this square can send
$\ell$ messages such that all destination nodes are in the destination set
$D=R^v_{d+\ell_{\max},d+\ell_{\max}} \setminus R^v_{d,d}$. Let $S$ be the
subgrid generated by positive linear combinations of the vectors
\textbf{i} and \textbf{j}. More precisely, $S:=\{v+
\alpha\text{\emph{\textbf{i}}} + \beta \text{\emph{\textbf{j}}},
\alpha\geq 0, \beta \geq 0\}$. \textsc{Fig.} \ref{fig:2arcs_axis}b gives a
graphical illustration.

To prove this, we consider a bipartite graph $H$ on vertex set $R^v_{d,d}
\cup D$, with an edge between a vertex of $R^v_{d,d}$ and a vertex of $D$
if they are at distance at most $\ell_{\max}$ in $S$. To apply Theorem
\ref{teo:cor_hall}, we have to show that any subset of vertices $A \subset
R^v_{d,d}$ has at least $c|A|$ neighbors in $H$. Theorem
\ref{teo:cor_hall} will then ensure the existence of a feasible
repartition of the messages from vertices of $R^v_{d,d}$ to those of $D$
such that all shortest paths have length at most $\ell_{\max}$.

Given $A\subset R^v_{d,d}$, let us call $D_A:=\{u \in D\
:\text{dist}_{S}(A,u) \leq \ell_{\max}\}$, where $\text{dist}_{S}(A,u)$
means the minimum distance in $S$ from any vertex of $A$ to the vertex
$u$. For any $A\subset R^v_{d,d}$, we need to show that

\begin{equation}
\label{eq:expansion} |D_A|\geq c|A|\
\end{equation}
\vspace{0.00cm}

Without loss of generality we suppose that $A$ is maximal, in the sense
that there is no set $A'$ strictly containing $A$ with $D_A=D_{A'}$.
Instead of considering all possible sets $A$, we will show below that we
can restrict ourselves to rectangles. Hence given a set $A$, we denote by
$R_A$ the smallest rectangle containing the subset of vertices $A$. We first
claim that

\begin{equation}
\label{eq:claim} |D_{R_A} \setminus D_A| \leq |R_A \setminus A|
\end{equation}
\vspace{0.00cm}

Indeed, this equality can be shown by induction on $|R_A\setminus A|$. For
$|R_A\setminus A|=0$ the equality is trivial. Suppose that it is true for
$|R_A\setminus A|$. The induction step
consists in showing that there is an element $x$ in $R_A\setminus A$ such that $|D_{R_A}
\setminus D_{A\cup \{x\}}|-|D_{R_A}
\setminus D_A|\leq 1$ (note that $D_{R_{A \cup \{x\}}}=D_{R_A}$):\\

\begin{itemize}
\item[$\bullet$] If there exists $x$ such that $x+\emph{\textbf{j}}$ and $x-\emph{\textbf{i}}$ are in $A$ and $x-\emph{\textbf{j}}$ is not in $A$, then we select this $x$.  From $x$ the only new vertex we may add to $D_A$ is $x+\ell_{\max}\emph{\textbf{i}}$.
\item[$\bullet$] Otherwise, if there exists $x$ such that $x-\emph{\textbf{j}}$ and $x-\emph{\textbf{i}}$ are in $A$ and $x+\emph{\textbf{i}}$ is not in $A$, then we select this $x$. In this case the only new vertex we may add to $D_A$ is $x-\ell_{\max}\emph{\textbf{k}}$.
\item[$\bullet$] If none of the previous cases holds, since $R_A$ is the smallest rectangle containing $A$, and
 $A$ is maximal, then necessarily there exists an $x$ such that $x+\emph{\textbf{i}}$ and $x-\emph{\textbf{j}}$ are in $A$ and $x-\emph{\textbf{i}}$ is not in $A$. We select this $x$, and the only new vertex we may add to $D_A$ is $x+\ell_{\max}\emph{\textbf{j}}$.\\
\end{itemize}

\noindent Thus, in all cases there exists an $x$ adding at most one neighbor to $D_{R_A}
\setminus D_A$, which finishes the induction step and proves Equation (\ref{eq:claim}). To finish the proof of the fact that we can restrict ourselves to rectangles, we
show that, for any subset $A$, if Inequality (\ref{eq:expansion}) holds
for $R_A$, then it also holds for $A$. Indeed, Inequality (\ref{eq:expansion}) applied to $R_A$
gives:

$$c |R_A| \leq |D_{R_A}|\ ,\ \text{ which is equivalent to}$$

\begin{equation}
\label{eq:equiv} c (|A| + |R_A \setminus A|)\leq |D_A| +|D_{R_A} \setminus
D_A|
\end{equation}
\vspace{0.00cm}

Using Inequality (\ref{eq:claim}) and the fact that $c \geq 1$, Inequality
(\ref{eq:equiv}) clearly implies that Inequality (\ref{eq:expansion}) holds.

Henceforth we assume that $A$ is a rectangle. The last simplification consists in proving that we can restrict ourselves
to rectangles containing $v$. In other words, it will be sufficient to prove Inequality
(\ref{eq:expansion}) for all rectangles $R^v_{a,b}$. Given a rectangle $R$ not positioned at $v$, the rectangle $R'$ of the same size positioned at $v$ has less neighbors, hence if Inequality (\ref{eq:expansion})
holds for $R'$, it also holds for $R$.

Finally let us prove that Inequality (\ref{eq:expansion}) holds for all
rectangles $R^v_{a,b}$, with $1\leq
a,b < d$. We have  $|R^v_{a,b}|=ab$ and
$|D_{R^v_{a,b}}|=(a+\ell_{\max})(b+\ell_{\max})-d^2$.
By the choice of $d$, starting from the Inequality $d^2 \leq
\frac{(\ell_{\max}+1)^2}{c+1}$ and using that $1\leq a,b$, one obtains that
$d^2c \leq (\ell_{\max}+a)(\ell_{\max}+b)-d^2$ for any $1\leq a,b < d$. This
implies, using $a,b < d$, that $cab \leq (a+\ell_{\max})(b+\ell_{\max})-d^2$
for any $1\leq a,b < d$, hence Inequality (\ref{eq:expansion}) (i.e. $c |R^v_{a,b}| \leq
|D_{R^v_{a,b}}|$) holds.

So by Theorem \ref{teo:cor_hall}, each one of the $d^2$ nodes in
$R^v_{d,d}$ can send $\ell$ messages to the nodes of $D$. Since the
number of edges going from $R^v_{d,d}$ to $D$ is $4d-1$, we apply the
bisection bound discussed in Section \ref{sec:bounds} to conclude that
there is an edge of the border of the square $R^v_{d,d}$ with
congestion at least $\PartIntSup{\frac{\ell \cdot d^2}{4 d-1}}
>\PartIntSup{ \frac{\ell \cdot d}{4}}$. This finishes the proof of the lemma. \end{proof}

We observe that this second lower bound is strictly better than the first one if
and only if

$$
\frac{c}{\sqrt{c+1}}> \frac{4\ell_{\max}}{\ell_{\max}+1}
$$

If both $c$ and $\ell_{\max}$ are big, the condition becomes approximately:

$$\frac{\max\{\ell,k\}}{\min\{\ell,k\}}>16$$

That is, the second lower bound is better when the difference between
$\ell$ and $k$ is big. This is the case of broadcast or gathering, where
messages are originated (or destined) from (or to) a small set of nodes of
the network.\\

The two lower bounds can be combined to give:
\begin{lemma}[Combined lower bound] The worst-case running time of any algorithm for $(\ell,k)$-routing on full-duplex
triangular grids using shortest path routing satisfies
\begin{eqnarray*}
\text{Running time}&\geq&  \max \left( \ell_{\max} \cdot \min\{\ell,k\},\
 \max\{\ell,k\}\cdot \left \lfloor\frac{\ell_{\max}+1}{4\sqrt{c+1}}
\right \rfloor\right)\\
&\approx& \ell_{\max} \cdot \max \left( \min\{\ell,k\},\
 \frac{\max\{\ell,k\}}{4\sqrt{c+1}}
\right)
\end{eqnarray*}

\end{lemma}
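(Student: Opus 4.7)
The plan is essentially a bookkeeping exercise: the two constituent lower bounds have already been established as Lemma \ref{lem:LB1} and Lemma \ref{lem:LB2}, so the running time of any algorithm using shortest path routing is simultaneously bounded below by each. The first step is therefore to observe that any quantity bounded below by two numbers is bounded below by their maximum, which immediately produces the first line of the displayed inequality, modulo a minor rewriting of the second term that I address next.

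The only nontrivial verification is that the expression $\max\{\ell,k\}\cdot\left\lfloor\tfrac{\ell_{\max}+1}{4\sqrt{c+1}}\right\rfloor$ in the combined bound is genuinely dominated by the bound of Lemma \ref{lem:LB2}, namely $\left\lceil\tfrac{\max\{\ell,k\}}{4}\cdot\left\lfloor\tfrac{\ell_{\max}+1}{\sqrt{c+1}}\right\rfloor\right\rceil$. This follows from the elementary inequality $4\lfloor x/4\rfloor \leq \lfloor x\rfloor$ valid for every nonnegative real $x$: applying it with $x=\tfrac{\ell_{\max}+1}{\sqrt{c+1}}$ gives $\left\lfloor\tfrac{\ell_{\max}+1}{4\sqrt{c+1}}\right\rfloor \leq \tfrac{1}{4}\left\lfloor\tfrac{\ell_{\max}+1}{\sqrt{c+1}}\right\rfloor$, so after multiplying by $\max\{\ell,k\}$ the new expression is no larger than the one from Lemma \ref{lem:LB2} (and the ceiling on the right only helps), hence is itself a valid lower bound.

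For the second (approximate) line of the statement, the idea is simply to drop the floor, factor out $\ell_{\max}$, and recognize the two arguments of the maximum as $\min\{\ell,k\}$ and $\tfrac{\max\{\ell,k\}}{4\sqrt{c+1}}$ respectively. Concretely, $\lfloor(\ell_{\max}+1)/(4\sqrt{c+1})\rfloor = \ell_{\max}/(4\sqrt{c+1}) + O(1)$, so up to lower-order terms the second expression equals $\ell_{\max}\cdot\tfrac{\max\{\ell,k\}}{4\sqrt{c+1}}$, and the common factor $\ell_{\max}$ can be pulled outside the maximum.

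There is essentially no obstacle here: the hard work has already been done in the proofs of Lemmas \ref{lem:LB1} and \ref{lem:LB2} (the latter using Hall's theorem via Theorem \ref{teo:cor_hall}). The only thing to be mildly careful about is the exchange between $\tfrac{1}{4}\lfloor\cdot\rfloor$ and $\lfloor\tfrac{\cdot}{4}\rfloor$ sketched above, which justifies that the slightly repackaged form in the combined statement is indeed implied by, rather than strictly equal to, the bound of Lemma \ref{lem:LB2}.
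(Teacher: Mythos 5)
Your proposal is correct and takes essentially the same route as the paper, which offers no separate proof and simply presents the combined bound as the maximum of the bounds of Lemmas~\ref{lem:LB1} and~\ref{lem:LB2}. Your extra verification that $\max\{\ell,k\}\cdot\left\lfloor\frac{\ell_{\max}+1}{4\sqrt{c+1}}\right\rfloor$ is dominated by the expression in Lemma~\ref{lem:LB2}, via $4\lfloor x/4\rfloor\leq\lfloor x\rfloor$, correctly supplies the only detail the paper leaves implicit.
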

Now we provide an algorithm from which we derive an upper bound.
\begin{proposition}[Upper bound (algorithm)]
The algorithm for $(\ell,k)$-routing on full-duplex triangular grids is
the following: route all packets as in the permutation routing case. That
is, at each node send packets first in their negative component, breaking
ties arbitrarily (there can be $\ell$ packets in conflict in a negative
component). If there are no packets with negative components, send any of
the (at most $k$) packets with maximum remaining distance.
$$
\text{Running time}\leq \left\{\begin{array}{cl}
\min\{\ell,k\}\cdot\frac{c(c-1)}{2}+\max\{\ell,k\} \cdot (\ell_{\max}-c+1)
&\mbox{, if }c\leq \ell_{\max}
\\\min\{\ell,k\} \cdot \frac{\ell_{\max}(\ell_{\max}+1)}{2}&\mbox{, if }c > \ell_{\max}
\end{array}\right.
$$
\end{proposition}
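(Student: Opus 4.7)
The plan is to track the progress of each packet through the network and bound its total delay by adapting the analysis used for the optimal permutation routing algorithm on the triangular grid. The basic scheme of the algorithm mirrors that of the permutation case: packets are first sent along their negative-component direction, then along their positive-component direction with priority by remaining distance. The new difficulty is that multiple packets may now queue for the same outgoing edge, both because up to $\ell$ packets originate at a single node and because packets passing through intermediate nodes can accumulate.

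First I would bound the congestion at any single edge $e$, decomposing its load into (i) packets crossing $e$ during the negative-component phase and (ii) packets crossing $e$ during the positive-component phase. Since there are at most $\ell$ packets per source and at most $k$ per destination and only shortest paths are used, a counting argument of the same flavour as the one used in the proof of Lemma \ref{lem:LB1} yields a naive baseline of roughly $\min\{\ell,k\}\cdot\ell_{\max}$ for the congestion. This alone is not enough to reach the target bound, so a more refined progress argument is needed.

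The core of the proof would partition the execution into two sub-phases. In the first sub-phase, of length at most $\min\{\ell,k\}\cdot c(c-1)/2$, the algorithm clears the initial ``burst'' that arises from the imbalance between $\ell$ and $k$: because at each node up to $\min\{\ell,k\}$ packets may share a given outgoing direction, and the bursts from up to $c-1$ such directions must be discharged sequentially through a common bottleneck, compounding the delays yields the triangular-number term. In the second sub-phase, once the bursts have dispersed, the algorithm enters a steady-state regime that mimics permutation routing: the maximum remaining distance among all undelivered packets decreases monotonically, with at most $\max\{\ell,k\}$ steps required to decrement it by $1$ (since at most $\max\{\ell,k\}$ packets can share the maximum remaining distance on any given outgoing edge of a node). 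This contributes $\max\{\ell,k\}\cdot(\ell_{\max}-c+1)$ steps, matching the first branch of the formula. Specialising to $\ell=k$ (so $c=1$) recovers the tight bound $\ell\cdot\ell_{\max}$ announced earlier.

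The second branch, the case $c>\ell_{\max}$, needs separate treatment because the dilation is too small for the steady-state regime to take over: there I would directly telescope the per-level contributions, using that a packet at remaining distance $i$ can be delayed by at most $\min\{\ell,k\}\cdot i$ steps, and summing over $i=1,\ldots,\ell_{\max}$ to obtain $\min\{\ell,k\}\cdot\ell_{\max}(\ell_{\max}+1)/2$. The hard part will be the careful bookkeeping in the burst-clearing sub-phase, where packets originating at different sources can pile up along a common shortest path; I expect to handle this cleanly by introducing a potential function equal to the total undelivered remaining distance, showing it drops by a controlled amount each step, and combining this with the observation that once a packet has entered its positive-component phase it can block at most $\max\{\ell,k\}-1$ others from sharing its outgoing edge.
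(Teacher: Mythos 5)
Your overall shape (a level-by-level decomposition whose contributions sum to the two branches of the formula) is right, and your treatment of the case $c>\ell_{\max}$ --- level $i$ costs $i\cdot\min\{\ell,k\}$ steps, telescoping to $\min\{\ell,k\}\,\ell_{\max}(\ell_{\max}+1)/2$ --- is essentially the paper's argument. But there is a genuine gap in how you obtain the term $\min\{\ell,k\}\cdot\frac{c(c-1)}{2}$ in the first branch. The paper's proof is a single decreasing induction on the current maximum remaining distance $i$, resting on one key claim: when the maximum remaining distance equals $i$, it drops to $i-1$ within $\min\{\ell,\, i k\}$ steps (taking $\ell\ge k$ without loss of generality). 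The bound $\ell$ comes from contention among negative-component packets, and the bound $i k$ comes from the receiver constraint: a packet at remaining distance $i$ competes only with packets headed to nearby destinations, each of which absorbs at most $k$ packets, so at most $i k$ packets can block its move. Summing $\min\{\ell, i k\}$ over $i=\ell_{\max},\dots,1$ gives $\ell$ for each of the $\ell_{\max}-c+1$ large levels and $i k$ for each level $i<c$, whence the triangular term. In particular, the $\frac{c(c-1)}{2}$ term arises at the \emph{end} of the schedule, when the remaining distances have fallen below $c$ and the receiver bound becomes the binding one --- not from an ``initial burst'' at the start, as you describe.

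Your substitute mechanism for that term --- bursts from up to $c-1$ outgoing directions discharged sequentially through a common bottleneck --- does not correspond to the geometry (a triangular-grid node has six outgoing directions regardless of $c$), and I do not see how it could be turned into a proof; the arithmetic happens to produce the same number, but the delays it describes are not the ones that occur. Likewise, the potential function ``total undelivered remaining distance'' is too coarse: it can be of order $n\ell\ell_{\max}$ while dropping by only the number of active edges per step, so it cannot yield a bound independent of the network size. To close the gap you need the localized inductive invariant above: the \emph{maximum} remaining distance (not the total) decreases by one every $\min\{\ell, i k\}$ steps, proved by splitting the packets at distance $i$ into those still on their negative component (which wait at most $\min\{\ell, i k\}$ steps) and those on their positive component (which wait at most $k$ steps, since each destination receives at most $k$ packets). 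With that lemma the whole bound follows from the recurrence $T(i)\le \min\{\ell, i k\}+T(i-1)$.
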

\begin{proof}
Suppose again without loss of generality that $\ell \geq k$. We proceed by decreasing induction on $ \ell_{\max}$. We prove that after $\min\{\ell,
\ell_{\max}k\}$ steps, each packet will be at distance at most $\ell_{\max} -1$ of its destination. This yields

\begin{equation*}
\text{Running time}(\ell_{\max}) \leq   \min\{\ell, \ell_{\max}k\} + \text{Running time}(\ell_{\max}-1)
\end{equation*}
\begin{equation*}
\leq \min\{\ell, \ell_{\max}k\} + \left \{\begin{array}{cl}
\min\{\ell,k\}\cdot\frac{c(c-1)}{2}+\max\{\ell,k\} \cdot (\ell_{\max}-c)
&\mbox{, if }c\leq \ell_{\max}-1
\\\min\{\ell,k\} \cdot \frac{\ell_{\max}(\ell_{\max}-1)}{2}&\mbox{, if }c > \ell_{\max}-1
\end{array}\right.
\end{equation*}
\begin{equation*}
\leq \left \{\begin{array}{cl}
\min\{\ell,k\}\cdot\frac{c(c-1)}{2}+\max\{\ell,k\} \cdot (\ell_{\max}-c+1)
&\mbox{, if }c\leq \ell_{\max}
\\\min\{\ell,k\} \cdot \frac{\ell_{\max}(\ell_{\max}+1)}{2}&\mbox{, if }c > \ell_{\max}
\end{array}\right.
\end{equation*}

 Let us consider the messages at distance $\ell_{\max}$ to their destinations. They are of two types, the one moving according to their negative component and the one moving according to their positive component.

If $c\leq \ell_{\max}$ the first ones move after at most $\ell$ time steps. If $ c < \ell_{\max}$ they move more quickly,
indeed they move at least once every $\ell_{\max}k$ steps ($\ell_{\max}k \leq c \cdot k=\ell$). This is due to the fact that when $c < \ell_{\max}$ at a given vertex, at most $\ell_{\max}k$ messages may have to move according to their negative component toward a node at distance $\ell_{\max}$.

About the messages which move according to their positive component, since a node is the destination of at most $k$ messages, they may wait at most $k$ steps.

Consequently, $\ell_{\max}$ decreases by at least one every $\min\{\ell,
\ell_{\max}k\}$ steps, which gives the result.
\end{proof}

This gives an algorithm which is fully distributed. Dividing the running
time of this algorithm by the combined lower bound we obtain the
following ratio:

$$
\left\{\begin{array}{cl} \frac{\min\{\ell,k\}\cdot{c \choose
2}+\max\{\ell,k\} \cdot (\ell_{\max}-c+1)}{\ell_{\max} \cdot \max
\left(\min\{\ell,k\}\ ,\ \frac{\max\{\ell,k\}}{4 \sqrt{c+1}}\right)}
&\mbox{, if }c\leq \ell_{\max}
\\\frac{\min\{\ell,k\} \cdot (\ell_{\max}+1)}{2 \cdot \max \left(\min\{\ell,k\}\ ,\ \frac{\max\{\ell,k\}}{4 \sqrt{c+1}}\right)}&\mbox{, if }c > \ell_{\max}
\end{array}\right.
$$




We observe that in all cases the running time of the algorithm is at most
$\max\{\ell,k\}\cdot\ell_{\max}$. In particular, when $\ell=k$ (that is,
$c=1$) the running time is exactly
$\max\{\ell,k\}\cdot\ell_{\max}=\min\{\ell,k\}\cdot\ell_{\max}$, and
therefore it is tight (see lower bound of Lemma \ref{lem:LB1}).
\begin{corollary}
There exists a tight algorithm for $(k,k)$-routing in full-duplex
triangular grids. \end{corollary}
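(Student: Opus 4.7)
The plan is to derive the corollary as a direct consequence of the combined upper and lower bounds that were just established, specialized to the case $\ell = k$. First I would observe that when $\ell = k$, the parameter $c = \PartIntSup{\max\{\ell,k\}/\min\{\ell,k\}}$ collapses to $c = 1$, which is the minimum possible value. This is the regime where the two lower bounds from Lemma \ref{lem:LB1} and Lemma \ref{lem:LB2} align most favourably with the algorithm's performance, so one should expect tightness here rather than for unbalanced $(\ell,k)$.

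Next I would plug $c=1$ (and the natural assumption $\ell_{\max} \geq 1$, so that $c \leq \ell_{\max}$) into the first branch of the upper bound from the preceding proposition. The term $\min\{\ell,k\} \cdot \tfrac{c(c-1)}{2}$ vanishes because $c-1 = 0$, and the remaining term $\max\{\ell,k\} \cdot (\ell_{\max} - c + 1)$ becomes $k \cdot \ell_{\max}$. Hence the algorithm terminates in at most $k\cdot\ell_{\max}$ steps on any instance of $(k,k)$-routing on a full-duplex triangular grid.

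For the matching lower bound, I would invoke Lemma \ref{lem:LB1} directly with $\ell = k$: its guarantee $\min\{\ell,k\}\cdot\ell_{\max}$ specializes to $k\cdot\ell_{\max}$, witnessed by the explicit worst-case instance constructed in the proof of that lemma (the $\ell_{\max}$ aligned source nodes each sending $k$ messages across a distinguished edge). Combining the two bounds gives running time exactly $k\cdot\ell_{\max}$, so no shortest-path algorithm can do better and the proposed algorithm matches this figure.

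There is no real obstacle here beyond the bookkeeping: the only care needed is to verify that the first branch of the piecewise upper bound (rather than the second) is the one that applies, which is immediate from $c = 1 \leq \ell_{\max}$ in any non-degenerate instance. The degenerate case $\ell_{\max}=0$ is trivial since no packets need to move. Thus the corollary follows in a few lines by substitution and citation of the two preceding results.
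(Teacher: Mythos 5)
Your proposal is correct and follows exactly the paper's own route: the paper derives the corollary from the observation that $\ell=k$ forces $c=1$, collapsing the first branch of the upper bound to $\max\{\ell,k\}\cdot\ell_{\max}=k\cdot\ell_{\max}$, which matches the lower bound of Lemma~\ref{lem:LB1}. Your added remarks on the degenerate case $\ell_{\max}=0$ and on verifying $c\leq\ell_{\max}$ are harmless bookkeeping that the paper leaves implicit.
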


The previous algorithms can be generalized for half-duplex triangular
grids as well as for full and half-duplex hexagonal grids. The
generalization to half-duplex grids is obtained by just adding a factor 2
in both the lower bound and the running time of the algorithm, as we did
for the permutation routing algorithm. Thus, let us just focus on the case
of full-duplex hexagonal grids, for which we have the following theorems:

\begin{theorem}
There exists an algorithm for $(\ell,k)$-routing in full-duplex hexagonal
grids whose running time is at most:
$$
\text{\emph{Running time}}\leq \left\{\begin{array}{cl}
2 \min\{\ell,k\}\cdot\frac{c(c-1)}{2}+2\max\{\ell,k\} \cdot (\ell_{\max}-c+1)    & \mbox{, if } c \leq \ell_{\max}\\
2 \min\{\ell,k\} \cdot \frac{\ell_{\max}(\ell_{\max}+1)}{2}           & \mbox{, if } c >    \ell_{\max}
\end{array}\right.
$$
\end{theorem}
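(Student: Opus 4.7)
The plan is to adapt the $(\ell,k)$-routing algorithm for the full-duplex triangular grid to the hexagonal grid by combining it with the Phase~1/Phase~2 alternation used for permutation routing on hexagonal grids (Section~\ref{sec:hex}). First I would describe the algorithm: at each node, group packets into queues according to the next edge along their (fixed) shortest path. During each step, which belongs either to Phase~1 or Phase~2 alternately, apply the triangular-grid rule per outgoing edge that is currently active: if there are packets whose address still has a negative component along that direction, send one such packet (breaking ties arbitrarily, since at most $\ell$ packets can compete); otherwise send a packet with maximum remaining distance (at most $k$ such packets compete for a given final-direction edge, since each node is the destination of at most $k$ packets).

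The analysis would follow the template of the triangular-grid proof combined with the key observation already used to prove Theorem~\ref{theo:hexfull}: because each of the three edge types is associated with a different chain in Phase~1 than in Phase~2, every pair (Phase~1 step, Phase~2 step) simulates exactly one step of the corresponding algorithm on the triangular grid. More precisely, I would argue by decreasing induction on $\ell_{\max}$, as in the triangular-grid proof, showing that after at most $2\min\{\ell,\ell_{\max}k\}$ hexagonal steps every remaining packet has distance at most $\ell_{\max}-1$ from its destination. Summing the telescoping bound yields
\begin{equation*}
\text{Running time} \leq 2\,\min\{\ell,k\}\cdot\tfrac{c(c-1)}{2}+2\,\max\{\ell,k\}\cdot(\ell_{\max}-c+1)
\end{equation*}
when $c\le\ell_{\max}$, and $2\min\{\ell,k\}\cdot\ell_{\max}(\ell_{\max}+1)/2$ otherwise, which is the stated bound.

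Alternatively, one could obtain the same bound immediately from the embedding of the triangular grid into the hexagonal grid with load, dilation, and congestion~$2$ mentioned in~\ref{sec:embed}: any $T$-step algorithm on the triangular grid yields a $2T$-step algorithm on the hexagonal grid, and applying this to the triangular-grid $(\ell,k)$-routing algorithm of the previous proposition gives exactly the claimed bound. I would mention this as a sanity check, but give the direct proof since it produces a genuinely distributed algorithm on the hexagonal grid rather than one that simulates a triangular-grid schedule.

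The main obstacle I expect is verifying the ``one triangular step per two hexagonal steps'' correspondence in the multi-packet setting: in the permutation-routing argument uniqueness of the negative-component packet per edge and uniqueness of the maximum-remaining-distance packet per edge were used in an essential way. Here those queues may contain up to $\ell$ and up to $k$ packets respectively, so I would need to check carefully that the bookkeeping still matches the triangular-grid bounds, i.e.\ that the number of packets that must be pushed through each bottleneck edge in each simulated step is still the same as in the triangular grid. Once this is established, the induction carries through verbatim and the factor $2$ appears solely from the alternation of phases.
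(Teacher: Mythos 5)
Your proposal is correct and matches the paper's (implicit) argument: the paper gives no explicit proof of this theorem, merely asserting that the triangular-grid $(\ell,k)$-routing algorithm ``can be generalized'' to full-duplex hexagonal grids, and the factor $2$ you derive from the Phase~1/Phase~2 alternation (with the load-$2$ embedding as a cross-check) is exactly the intended generalization. Your honest flagging of the multi-packet bookkeeping is reasonable, but it is resolved the same way as in the triangular-grid proof, where the waiting bounds of $\ell$ and $k$ steps already account for non-uniqueness.
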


\begin{lemma}[First lower bound]
No algorithm based on shortest path routing can route all messages using
less than $2 \min\{\ell,k\}\cdot\ell_{\max} - \min\{\ell,k\}$ steps in the
worst case.
\end{lemma}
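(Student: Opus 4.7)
The approach mimics the permutation-routing lower bound for hexagonal grids established in Section \ref{sec:hex}, with two modifications: each source now carries $\min\{\ell,k\}$ packets instead of one, and we additionally exploit the endpoint $u$ of the saturating edge as a source (which was effectively unusable in the permutation setting because of the destination-uniqueness constraint).

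Concretely, I would fix an edge $e=uv$ of the hexagonal grid sitting at the intersection of the two zigzag chains $c_1$ and $c_2$, and let $L,R$ denote the two sides of $e$. On $L$ I select as sources the $2\ell_{\max}-1$ vertices consisting of $u$ itself, the $\ell_{\max}-1$ non-$u$ vertices of $c_1$ at chain-distance at most $\ell_{\max}-1$ from $u$, and the analogous $\ell_{\max}-1$ vertices of $c_2$. For each such source $s$ at chain-distance $d$ from $u$, I would assign $\min\{\ell,k\}$ pairwise distinct destinations on $R$ chosen inside the forward cone of $s$ at graph distance at most $\ell_{\max}-d-1$ from $v$, so that every shortest path from $s$ to a destination is forced to traverse $e$ from $L$ to $R$. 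This cone contains $\Omega((\ell_{\max}-d)^2)$ vertices, which is sufficient when the grid is large enough; picking destinations globally distinct across all sources means each destination receives exactly one packet, and so the $k$-receiving constraint is vacuously satisfied.

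By construction the $(2\ell_{\max}-1)\min\{\ell,k\}$ packets all cross $e$ in the same direction along their shortest paths, so the congestion of $e$ in that direction is exactly
$$
(2\ell_{\max}-1)\min\{\ell,k\} \;=\; 2\min\{\ell,k\}\cdot\ell_{\max}-\min\{\ell,k\}.
$$
Since a full-duplex edge carries at most one packet per step in each direction, the congestion lower bound of Section \ref{sec:bounds} immediately yields the claimed bound on the worst-case running time of any shortest-path routing algorithm.

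The main obstacle is a clean verification that the destination assignment really does force all shortest paths through $e$ in the correct direction. This relies on the shortest-path characterization for hexagonal grids recalled in Section \ref{sec:hex} (every shortest path uses at most two of the three chain types, with opposite signs), applied exactly as in the permutation-routing construction but now \emph{widened} by a factor $\min\{\ell,k\}$: the linear number of distinct destinations required, $(2\ell_{\max}-1)\min\{\ell,k\}$, is dominated by the quadratic-in-$\ell_{\max}$ supply provided by the cones at $v$ once the grid is sufficiently large.
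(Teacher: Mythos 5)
Your overall skeleton --- saturating a distinguished edge $e=uv$ lying at the crossing of two zigzag chains with $2\ell_{\max}-1$ sources, each contributing $\min\{\ell,k\}$ packets, and invoking the directed congestion bound of Section \ref{sec:bounds} --- is the natural extension of the permutation construction of Section \ref{sec:hex}, and the arithmetic $(2\ell_{\max}-1)\min\{\ell,k\}=2\min\{\ell,k\}\ell_{\max}-\min\{\ell,k\}$ is right. But the gap you flag at the end is a genuine one, not a verification chore: the claim that a source $s$ at chain-distance $d\geq 1$ from $u$ has $\Omega((\ell_{\max}-d)^2)$ destinations all of whose shortest paths traverse $e$ is false. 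If $t$ lies off the chain through $s$ and $e$, the shortest $s$--$t$ paths interleave steps along two chain directions, and the ``turn'' can be taken before reaching $u$, after which the path proceeds along a chain parallel to the one containing $e$ and never touches $e$. Hence for such an $s$ the forced destinations are exactly the chain-aligned vertices on the far ray, a set of size only $\ell_{\max}-d$; only the endpoint $s=u$ has a quadratic forced cone. This kills the pairwise-distinct-destination scheme outright: the source at chain-distance $\ell_{\max}-1$ from $u$ has $v$ as its unique forced destination within distance $\ell_{\max}$, so it cannot be assigned $\min\{\ell,k\}\geq 2$ distinct targets, and the two farthest sources (one per chain) compete for the single vertex $v$.

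The distinctness requirement is also unnecessary and is what pushes you into the cones in the first place: since $\min\{\ell,k\}\leq k$, each source may legally send all $\min\{\ell,k\}$ of its packets to one destination, namely its chain-aligned partner at distance exactly $\ell_{\max}$, exactly as in Lemma \ref{lem:LB1} for the triangular grid. Doing this, however, still sacrifices one source to the collision at $v$, so the directed congestion you can actually certify is $(2\ell_{\max}-2)\min\{\ell,k\}$ rather than $(2\ell_{\max}-1)\min\{\ell,k\}$ --- consistent with the per-direction load $2\ell_{\max}-2$ computed for permutations in Section \ref{sec:hex}. Indeed the extra $\min\{\ell,k\}$ in the statement is itself suspect: for $\ell=k=1$ the lemma would assert a worst-case lower bound of $2\ell_{\max}-1$, contradicting the $(2\ell_{\max}-2)$-step shortest-path algorithm of Theorem \ref{theo:hexfull}. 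So no refinement of this construction should be expected to reach the stated constant; the bound you can cleanly establish along these lines is $2\min\{\ell,k\}\ell_{\max}-2\min\{\ell,k\}$.
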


\begin{definition}
Given a vertex $v$, we call the \emph{rectangle} of the hexagonal grid of
side $(a,b)$ starting at $v$ to the subset of the hexagonal grid
${R_{hex}^v}_{a,b}=\{v+ \alpha \text{\emph{\textbf{i}}} + \beta
\text{\emph{\textbf{j}}} + \gamma \text{\emph{\textbf{k}}}, 0\leq \alpha <
a, -\gamma < \beta < b, 0\leq \gamma < b \} \cap H$ where $H$ is the
vertex set of the hexagonal grid. We call such a rectangle a \emph{square}
if $a=b$.
\end{definition}

The following lemma gives a second lower bound on the running time of any
algorithm using shortest path routing on full-duplex hexagonal grids.
\begin{lemma}[Second lower bound]
The worst-case running time of any algorithm using shortest path routing
on full-duplex hexagonal grids satisfies:
$$\text{Running time}\geq  \PartIntSup{\max\{\ell,k\}(2d+\frac{d-2}{2d+1})}\ \ ,$$
where $d=\PartIntInf{
\frac{\sqrt{73c+64\ell_{\max}^2+121+144\ell_{\max}}}{8\sqrt{c+1}} -
\frac{3}{8}}$ and $c=\PartIntSup{\frac{\max\{\ell,k\}}{\min\{\ell,k\}}}$.
\end{lemma}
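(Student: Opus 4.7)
The plan is to mimic the argument of Lemma~\ref{lem:LB2} for the triangular case, transposing each step to the hexagonal geometry and using the hexagonal analogue ${R_{hex}^v}_{d,d}$ of a square region. Without loss of generality assume $\ell\geq k$. First I would fix a vertex $v$ and consider the candidate square $S_d={R_{hex}^v}_{d,d}$ for some integer $d$ to be optimized later, together with the destination set $D$ consisting of all hexagonal-grid vertices outside $S_d$ but at distance at most $\ell_{\max}$ from some vertex of $S_d$ (measured inside the positive cone generated by $\textbf{i},\textbf{j},\textbf{k}$). The goal is to produce a $(\ell,k)$-instance in which every vertex of $S_d$ originates $\max\{\ell,k\}=\ell$ packets routed through shortest paths into pairwise-disjoint target families in $D$, and then apply the bisection bound on the edges crossing the boundary of $S_d$.

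Next I would set up the bipartite graph $H$ on $S_d\cup D$ with edges given by the ``shortest-path reachable'' relation, and invoke the corollary of Hall's theorem (Theorem~\ref{teo:cor_hall}) with expansion constant $c=\lceil\max\{\ell,k\}/\min\{\ell,k\}\rceil$. This requires verifying
\begin{equation*}
|D_A|\geq c\,|A|\qquad\text{for every }A\subseteq S_d.
\end{equation*}
Exactly as in the triangular proof, I would reduce this to checking the inequality only for hexagonal rectangles anchored at $v$, by the two-step argument: (i) replacing a maximal $A$ by its enclosing rectangle $R_A$, using a case analysis on a ``corner'' vertex $x\in R_A\setminus A$ to show $|D_{R_A}\setminus D_A|\leq|R_A\setminus A|$; and (ii) translating any rectangle to be anchored at $v$, which can only shrink $D$. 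The hexagonal case analysis is mildly heavier than the triangular one because a vertex of the honeycomb has degree three and the ``rectangle'' has a zig-zag boundary, but the same three-case scheme based on which pair of neighbors of $x$ lies in $A$ should go through.

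Once Hall's condition is reduced to rectangles $R^v_{a,b}$ with $1\leq a,b<d$, I would count $|R^v_{a,b}|$ and $|D_{R^v_{a,b}}|$ explicitly from the definition and derive the threshold condition on $d$ that makes $c\,|R^v_{a,b}|\leq|D_{R^v_{a,b}}|$ hold for all such $a,b$. Solving this condition as a quadratic in $d$ is precisely what produces the nested radical expression
\begin{equation*}
d=\left\lfloor\frac{\sqrt{73c+64\ell_{\max}^2+121+144\ell_{\max}}}{8\sqrt{c+1}}-\frac{3}{8}\right\rfloor;
\end{equation*}
I expect this algebraic step, together with counting $|S_d|$ and the number of boundary edges of $S_d$, to be the most delicate part of the argument, since the hexagonal geometry (degree $3$, zig-zag sides) makes the vertex and edge counts $N(d)$ and $B(d)$ nonlinear in $d$ and responsible for the unusual factor $2d+\frac{d-2}{2d+1}$.

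Finally, with Hall's condition in force, each of the $|S_d|$ vertices of $S_d$ can dispatch $\ell$ messages to disjoint targets in $D$ along shortest paths. Applying the bisection lower bound from Section~\ref{sec:bounds} to the cut consisting of the boundary edges of $S_d$ yields
\begin{equation*}
\text{Running time}\geq \left\lceil\frac{\ell\cdot N(d)}{B(d)}\right\rceil=\left\lceil\max\{\ell,k\}\left(2d+\frac{d-2}{2d+1}\right)\right\rceil,
\end{equation*}
after substituting the closed forms of $N(d)$ and $B(d)$. This matches the claimed bound and completes the proof.
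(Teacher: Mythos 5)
Your plan follows the paper's proof exactly: the paper likewise just states that the argument is ``done as for the triangular grid'' via Theorem~\ref{teo:cor_hall} applied to ${R_{hex}^v}_{d,d}$, and then supplies only the two hexagonal-specific counts, namely $4d^2+d-2$ vertices inside ${R_{hex}^v}_{d,d}$ and $2d+1$ outgoing edges, from which the stated congestion $\max\{\ell,k\}\frac{4d^2+d-2}{2d+1}$ follows. Those two counts (your $N(d)$ and $B(d)$) and the resulting optimization of $d$ are the only details you leave symbolic, so the approach is the same and the outline is sound.
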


Notice that when $\frac{\ell_{\max}}{c}$ is big, this value tends to $2
 \max\{\ell,k\} \frac{\ell_{\max}} {\sqrt{c+1}}$, obtaining a performance
approximately twice better than in triangular grids.

\begin{proof}
The proof consists in showing that 
the vertices of ${R_{hex}^v}_{d,d}$ can simultaneously
send $\max(\ell,k)$ messages to some vertices of
${R_{hex}^v}_{d+\ell_{\max},d+\ell_{\max}} \setminus {R_{hex}^v}_{d,d}$.
This is done as for the triangular grid, using again Theorem
\ref{teo:cor_hall}. We do not give all the details, since the idea behind
is the same as the proof of Lemma \ref{lem:LB2}.

Since the number of vertices inside ${R_{hex}^v}_{d,d}$ is $4d^2+d-2$, and
the number of edges outgoing from ${R_{hex}^v}_{d,d}$ is $2d+1$, the
congestion on these edges is $\max\{\ell,k\}\frac{4d^2+d-2}{2d+1}= \max\{\ell,k\}(2d+\frac{d-2}{2d+1})$.
\end{proof}



\section{Conclusions and Further Research}
 \label{sec:concl}

In this article we have studied the permutation routing, the $r$-central
routing and the general $(\ell,k)$-routing problems on plane grids, that
is square grids, triangular grids and hexagonal grids. We have assumed the
\emph{store-and-forward} $\Delta$-port model, and considered both full
and half-duplex networks. The main new results of this article are the
following:\\

\begin{itemize}

\item[1.] Tight (also including the constant factor) permutation routing
algorithms on
full-duplex hexagonal grids, and half duplex triangular and hexagonal
grids.
\item[2.] Tight (also including the constant factor) $r$-central routing
algorithms on triangular and hexagonal grids.

 \item[3.] Tight (also including the constant factor) $(k,k)$-routing
algorithms on
square, triangular and hexagonal grids.

\item[4.] Good approximation algorithms for $(\ell,k)$-routing in
square, triangular and hexagonal grids, together with new lower bounds on
the running time of any algorithm using shortest path routing.\\
\end{itemize}
\noindent All these algorithms are completely distributed, i.e., can be
implemented independently at each node. Finally, in~\ref{sec:weighted}, we have formulated
the $(\ell,k)$-routing problem as a \textsc{Weighted Edge Coloring}
problem on bipartite graphs.

There still remain several interesting open problems concerning
$(\ell,k)$-routing on plane grids. Of course, the most challenging problem
seems to find a tight $(\ell,k)$-routing algorithm for any plane grid, for
$\ell \neq k$. Another interesting avenue for further research is to take
into account the queue size. That is, to devise $(\ell,k)$-routing
algorithms with bounded queue size, or that optimize both the running time
and the queue size, under a certain trade-off.

\vspace{0.1cm}

\section*{Acknowledgments}
 We want to thank Fr\'ed\'eric Giroire,
Cl\'audia Linhares-Sales and Bruce Reed for insightful discussions. A special thank to Omid Amini, for both insightful discussions and his help to improve previous versions of this paper.

This work has been partially supported by European project IST FET AEOLUS, PACA region of France, Ministerio de Educaci\'on y Ciencia of Spain,  European Regional Development Fund under project TEC2005-03575, Catalan Research Council under project 2005SGR00256, Slovenian Research Agency ARRS, and  COST action 293 GRAAL, and has been done in the context of the  {\sc crc Corso} with France Telecom.

\begin{appendix}
\section{Defining the embeddings}

\label{sec:embed}

The results already known for the square grid can be used for a triangular
(resp. a hexagonal) grid if we have an adapted function mapping the square
grid into the triangular (resp. hexagonal) grid. Here we propose both
functions, namely \emph{square2triangle} and \emph{square2hexagon}.

The function \emph{square2triangle} is illustrated in \textsc{Fig.}
\ref{fig:s2t}. We perform the same routing as in the grid, i.e. we just
ignore the extra diagonal.

\vspace{-.2cm}
\begin{figure}[h!tb]
\begin{center}
\includegraphics[angle=-90,width=5.8cm]{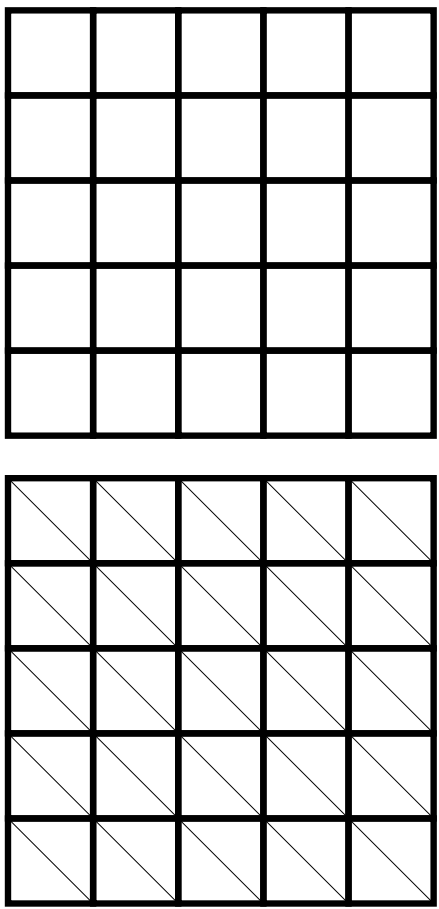}
\caption{Square grid mapped into the triangular grid. \label{fig:s2t}}\vspace{-.3cm}
\end{center}
\end{figure}

In a square grid the distance between two vertices is at most twice the
distance in a triangular grid. Similarly the congestion is at most doubled
going from the triangular grid to the square grid. Nevertheless the
maximal distance is unchanged. Indeed, the NW and SE nodes of
\textsc{Fig.} \ref{fig:s2t} are at the same distance in both grids.
Consequently, an algorithm which routes a permutation in $2n-2$ steps is
still optimal in the worst case. If instead of
considering a square grid with one extra diagonal, we look at a triangle
grid as on \textsc{Fig.} \ref{fig:triangle}, or in shape of a triangle,
Using the routing of the square grid in this triangle grid yields a
routing within twice the optimal (i.e. minimum time in the worst case).

\begin{figure}[h!tb]
\begin{center}
\includegraphics[width=4cm]{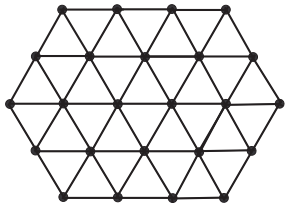}
\caption{Triangular grid.} \label{fig:triangle} \vspace{-.3cm}
\end{center}
\end{figure}

The function \emph{square2hexagon} is a little more complicated. Squares
are mapped in two different ways on the hexagonal grid, as shown in
\textsc{Fig.} \ref{fig:s2h}. Some are mapped on the left side of a hexagon
and some on the right side. Call them respectively \emph{white} and
\emph{black} squares. White and black squares alternate on the grid like
white and black on a chess board. The missing edge of a white square is
mapped to the path of length 3 that goes on the right of the hexagon. The
missing edge of a black square is mapped on the path of length 3 that goes
on the right of the hexagon. In this way each edge of the square grid is
uniquely mapped and each edge of the hexagonal grid is the image of
exactly two edges of the square grid.

\begin{figure}[h!tb]
\begin{center}
\includegraphics[angle=-90,width=8.0cm]{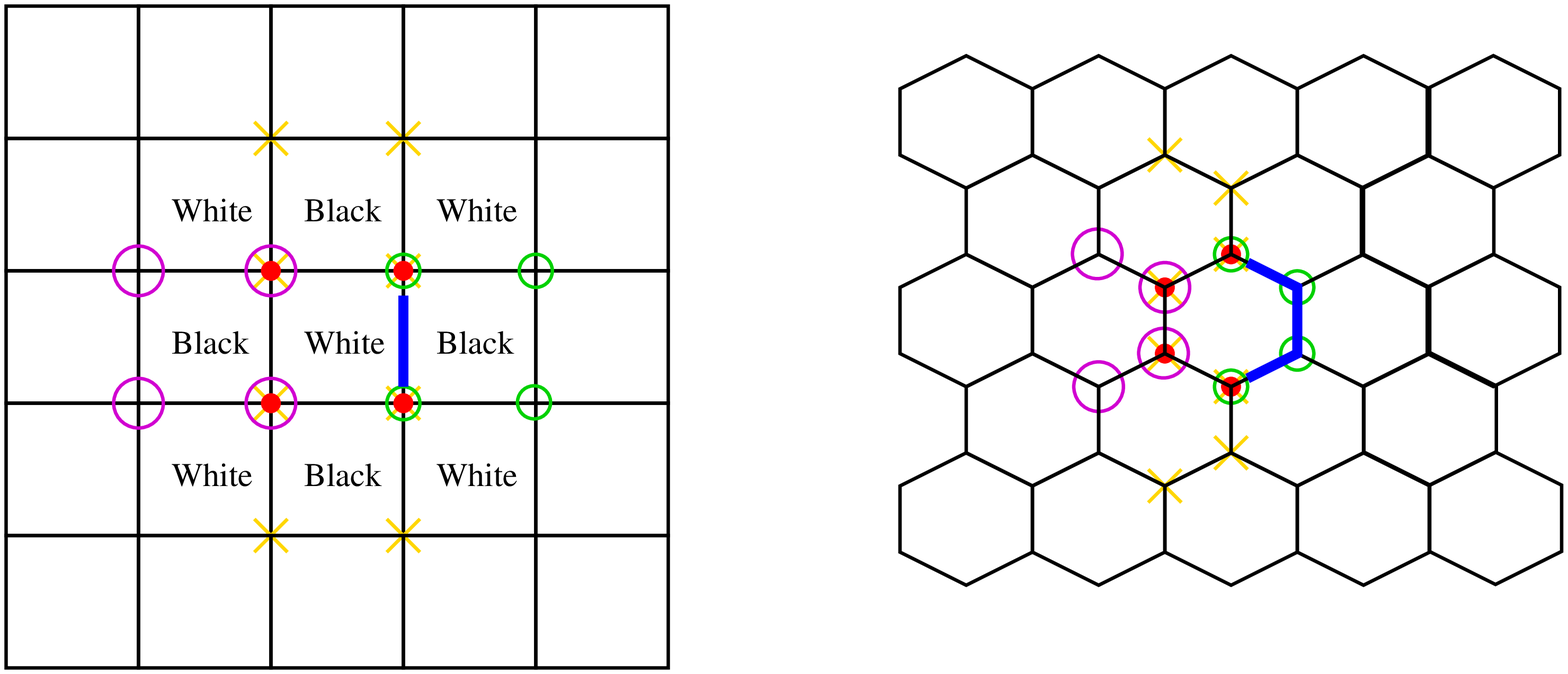}
\caption{Square grid mapped into the hexagonal grid.} \label{fig:s2h}\vspace{-.3cm}
\end{center}
\end{figure}
The distance between 2 vertices in the hexagonal grid is twice the
distance in the square grid plus one. Also when we adapt a routing from
the square grid to the hexagonal grid using the function
\emph{square2hexagon}, the congestion may double since each edge of the
hexagonal grid is the image of two edges of the square grid. Consequently,
a routing obtained using the function \emph{square2hexagon} will be within
a constant multiplicative factor of the optimal.

\section{An Approach for $(\ell,k)$-routing Using Weighted Coloring} \label{sec:weighted}


In any physical topology, we can represent a given instance of the problem
in the following way. Given a network on $n$ nodes, we build a bipartite
graph $H$ with a copy of each node at both sides of the bipartition. We
add an edge between $u$ and $v$ whenever $u$ wants to send a message to
$v$, and assign to each edge $uv$ a weight $w(uv)$ equals to the length of
a shortest path from $u$ to $v$ on the original grid. In this way we
obtain an edge-weighted bipartite graph $H$ on $2n$ nodes. Note that the
maximum degree of $H$ satisfies $\Delta \leq \max\{\ell,k\}$. An example
 for $\ell=2$ and $k=3$ is depicted in \textsc{Fig.}
\ref{fig:examplebipartite}.

\begin{figure}[h!tb]
\begin{center}
\includegraphics[width=4.5cm]{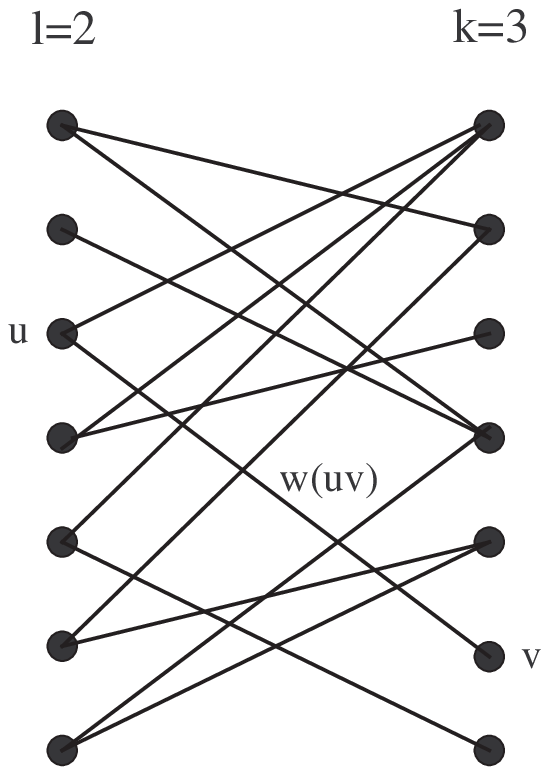}
\caption{Bipartite graph modeling a $(2,3)$-routing instance.} \label{fig:examplebipartite}
\end{center}
\end{figure}

The key idea behind this construction is that each matching in $H$
corresponds to an instance of a permutation routing problem. Hence, it can
be solved optimally, as we have proved for all types of grids in Section
\ref{sec:perm_rout}. For each matching $M_i$, we define its cost as
$c(M_i):=\max\{w(e)|e\in M_i)\}$. We assign this cost because on all grids
the running time of the permutation routing algorithms we have described
are proportional to the length of the longest shortest path (with equality
on full-duplex triangular grids).

>From the classical Hall's theorem we know that the edges of a bipartite
graph can be partitioned into $\Delta$ disjoint matchings (that is, a
coloring of the edges), $\Delta$ being the maximum degree of the graph. In
our case we have  $\Delta=\max\{\ell,k\}$. Thus, the problem consists in
partitioning the edges of $H$ into $\Delta$ matchings
$M_1,\ldots,M_{\Delta}$, in such a way that $\sum_{i=1}^{\Delta}c(M_i)$ is
minimized. That is, our problem, namely
\textsc{Weighted Bipartite Edge Coloring}, can be stated in the following way:\\

\textsc{Weighted Bipartite Edge Coloring}\\
\textbf{Input:} An edge-weighted bipartite graph $H$.\newline
\textbf{Output:} A partition of the edges of $H$ into matchings
$M_1,\ldots,M_\Delta$, with $c(M_i):=\max\{w(e)|e\in M_i)\}$.\newline
\textbf{Objective:} $\min \sum_{i=1}^{\Delta}c(M_i)$.\\

Therefore, $\min \sum_{i=1}^{\Delta}c(M_i)$ is the running time for
routing an $(\ell,k)$-routing instance using this algorithm.

Unfortunately, in \cite{weighted_coloring_approx_bipartite}
\textsc{Weighted Edge Coloring} is proved to be strongly
\textsc{NP}-complete for bipartite graphs, which is the case we
are interested in. In fact, the problem remains strongly
NP-complete even restricted to cubic and planar bipartite graphs.
Concerning approximation results, the authors
\cite{weighted_coloring_approx_bipartite} provide an
inapproximability bound of $\frac{7}{6}-\varepsilon$, for any
$\varepsilon>0$. Furthermore, they match this bound with an
approximation algorithm within $7/6$ on graphs with maximum degree
3, improving the best known approximation ratio of $5/3$
\cite{weighted_coloring_expensive}. In \cite{kesselmanNPhard} this
innaproximability bound is proved independently on general
bipartite graphs. Thus, if $\max\{\ell,k\}\leq 3$ we can find a
solution of \textsc{Weighted Bipartite Edge Coloring} within
$\frac{7}{6}$ times the optimal solution, and this will be also a
solution for the $(\ell,k)$-routing problem.

\begin{observation}
Although of theoretical value, the main problem of this algorithm is that
finding these matchings is a \textbf{centralized} task. In addition, the
\emph{true} ratio, i.e. related to the optimum of the $(\ell,k)$-routing,
should be proved to provide an upper bound for the running time of this
algorithm.
\end{observation}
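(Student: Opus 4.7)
The plan is to turn the informal observation into two separate technical claims and address them in order: first, to give a precise sense in which the matching-based algorithm is centralized, and second, to derive a quantitative bound on the ratio between its running time and $\mathrm{OPT}_{(\ell,k)}$, the true optimum of the $(\ell,k)$-routing problem on the grid.

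For the first claim, I would formalize ``centralized'' by exhibiting the information that every node needs in order to locally decide which matching $M_i$ each of its outgoing packets belongs to. Constructing the bipartite graph $H$ already encodes all source-destination pairs in the instance, so any scheme that computes an (even approximate) weighted edge coloring must aggregate globally-defined requests. I would argue this by a simple adversary/indistinguishability argument: two instances that look identical in the local neighborhood of a node $u$ but that differ in remote request patterns can force different matchings at $u$ in any coloring that respects the approximation ratio of \cite{weighted_coloring_approx_bipartite}; consequently no $o(\mathrm{diam})$-round protocol can implement the algorithm. This both justifies the word \emph{centralized} and quantifies the gap with the oblivious distributed algorithms of Sections \ref{sec:perm_rout}--\ref{sec:lk}.

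For the second claim, the plan is to chain three inequalities. First, by executing the matchings $M_1,\dots,M_\Delta$ sequentially and invoking the optimal permutation routing results (Theorem \ref{teo:perm_square}, Theorem \ref{theo:hexfull}, Theorem \ref{theo:hexhalf} and the triangular-grid analogue of \cite{INOC07}), the running time is at most $\alpha\sum_{i=1}^{\Delta}c(M_i)+\beta\Delta$ for explicit constants $\alpha,\beta$ depending only on the grid type and duplex mode. Second, using the $7/6$-approximation of \cite{weighted_coloring_approx_bipartite} for maximum degree $3$, or a $5/3$-approximation from \cite{weighted_coloring_expensive} in general, $\sum_{i=1}^{\Delta}c(M_i)\le \rho\cdot \mathrm{OPT}_{\textsc{wbec}}(H)$. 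Third, I would lower-bound $\mathrm{OPT}_{(\ell,k)}$ by $\mathrm{OPT}_{\textsc{wbec}}(H)$ up to a constant by reusing the distance and congestion bounds of Section \ref{sec:bounds}: the dilation bound $\ell_{\max}$ and the bisection/congestion bound applied to the same rectangles $R^v_{d,d}$ used in the proof of Lemma \ref{lem:LB2} provide witnesses that force $\mathrm{OPT}_{(\ell,k)}\ge \gamma \cdot \mathrm{OPT}_{\textsc{wbec}}(H)$ for an explicit $\gamma$. Composing these three steps yields the sought upper bound of $(\alpha\rho/\gamma)\,\mathrm{OPT}_{(\ell,k)}+O(\Delta)$.

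The main obstacle, as flagged by the observation itself, is the third inequality: the weighted coloring objective penalizes the algorithm for executing matchings \emph{serially}, while an optimal $(\ell,k)$-routing algorithm is free to pipeline packets across what would correspond to different color classes. Consequently $\mathrm{OPT}_{\textsc{wbec}}(H)$ can be strictly larger than $\mathrm{OPT}_{(\ell,k)}$, and the constant $\gamma$ must be controlled carefully. I expect the proof to proceed by choosing the matchings so that $c(M_i)$ is non-increasing in $i$ and then charging each unit of $c(M_i)$ either to an edge realizing the congestion lower bound or to a packet realizing the dilation lower bound, much in the spirit of the $C+D$ accounting of Theorem \ref{teo:C+D}. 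Making this charging argument tight on convex subgrids, where both bounds interact, is the principal technical difficulty.
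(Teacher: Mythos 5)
First, note that the statement you are proving is a remark, not a theorem: the paper offers no proof of it, because it is editorial commentary pointing out two \emph{unresolved} weaknesses of the weighted-coloring approach. Your proposal tries to convert the remark into two provable claims, and while the first (formalizing ``centralized'' via an indistinguishability argument) is a reasonable, if over-engineered, sketch, the second is where the attempt genuinely fails. Your chain of inequalities hinges on the existence of a constant $\gamma>0$ with $\mathrm{OPT}_{(\ell,k)}\geq \gamma\cdot \mathrm{OPT}_{\textsc{wbec}}(H)$, and no such constant exists. Concretely, take $k$ packets all destined to a single node $v$, each originating at distance $L$ from $v$. In the bipartite graph $H$ the copy of $v$ on the receiving side has degree $k$, so its $k$ incident edges, each of weight $L$, must lie in $k$ distinct matchings, forcing $\mathrm{OPT}_{\textsc{wbec}}(H)\geq \sum_{i}c(M_i)\geq kL$. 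Yet the true routing optimum is $O(L+k)$, since $v$ has constantly many incoming edges and the packets can be pipelined. Taking $k=L$ the ratio $\mathrm{OPT}_{\textsc{wbec}}(H)/\mathrm{OPT}_{(\ell,k)}$ is $\Theta(L)$, i.e., unbounded. You flag this serialization penalty yourself as ``the principal technical difficulty,'' but you treat it as a constant to be ``controlled carefully'' by a $C+D$-style charging argument; the example shows the loss is not a constant but grows like $\min\{k,\ell_{\max}\}$, so the charging argument you defer to cannot exist in the form you describe.

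This is precisely why the paper's remark says the true ratio ``should be proved'' rather than asserting one: the weighted-coloring formulation yields an upper-bound \emph{construction} (a feasible schedule), but its cost is compared against $\mathrm{OPT}_{\textsc{wbec}}$, not against the routing optimum, and the two can diverge arbitrarily. A correct treatment would either restrict the instance class so that serial execution of matchings is near-optimal, or replace the objective $\sum_i \max_{e\in M_i}w(e)$ by something that reflects pipelining; your proposal does neither. The first half of your argument (centrality) also remains a sketch rather than a proof -- you would need to exhibit the two locally indistinguishable instances explicitly -- but that gap is minor compared with the failure of the third inequality.
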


\end{appendix}

\bibliographystyle{abbrv}

\bibliography{joinHSZ-Final}

\begin{thebibliography}{10}

\bibitem{AAF+97}
J.~Aspnes, Y.~Azar, A.~Fiat, S.~Plotkin, and O.~Waarts.
\newblock {On-line routing of virtual circuits with applications to load
  balancing and machine scheduling}.
\newblock {\em Journal of the ACM (JACM)}, 44(3):486--504, 1997.

\bibitem{ABD+06}
J.~Aspnes, C.~Busch, S.~Dolev, P.~Fatourou, C.~Georgiou, and A.~Shvartsman.
\newblock {Eight Open Problems in Distributed Computing}.
\newblock {\em Bulletin of the EATCS no}, 90:109--126, 2006.

\bibitem{AA94}
B.~Awerbuch and Y.~Azar.
\newblock {Local optimization of global objectives: competitive distributed
  deadlock resolution and resource allocation}.
\newblock In {\em Proceedings of 35th Annual Symposium on Foundations of
  Computer Science}, pages 240--249, 1994.

\bibitem{ACF+04}
Y.~Azar, E.~Cohen, A.~Fiat, H.~Kaplan, and H.~R{\"a}cke.
\newblock {Optimal oblivious routing in polynomial time}.
\newblock {\em Journal of Computer and System Sciences}, 69(3):383--394, 2004.

\bibitem{gathering_radio}
J.-C. Bermond, J.~Galtier, R.~Klasing, N.~Morales, and S.~P\'erennes.
\newblock {Hardness and approximation of Gathering in static radio networks}.
\newblock {\em Parallel Processing Letters}, 16(2):165--183, 2006.

\bibitem{BoHo82}
A.~Borodin and J.~Hopcroft.
\newblock {Routing, merging and sorting on parallel models of computation}.
\newblock In {\em Proceedings of the Fourteenth Annual ACM Symposium on Theory
  of Computing}, pages 338--344. ACM Press New York, NY, USA, 1982.

\bibitem{BMXb05}
C.~Busch, M.~Magdon-Ismail, and J.~Xi.
\newblock {Oblivious routing on geometric networks}.
\newblock In {\em Proceedings of the 17th annual ACM symposium on Parallelism
  in algorithms and architectures}, pages 316--324, 2005.

\bibitem{BMX05}
C.~Busch, M.~Magdon-Ismail, and J.~Xi.
\newblock {Optimal oblivious path selection on the mesh}.
\newblock In {\em Proceedings of the International Parallel and Distributed
  Processing Symposium (IPDPS05)}, 2005.

\bibitem{mesh_reconf}
J.~Cogolludo and S.~Rajasekaran.
\newblock {Permutation Routing on Reconfigurable Meshes}.
\newblock {\em Algorithmica}, 31:44--57, 2001.

\bibitem{wireless}
A.~Datta.
\newblock {A Fault-Tolerant Protocol for Energy-Efficient Permutation Routing
  in Wireless Networks}.
\newblock {\em IEEE Transactions on Computers}, 54(11):1409--1421, 2005.

\bibitem{weighted_coloring_approx_bipartite}
D.~de~Werra, M.~Demange, B.~Escoffier, J.~Monnot, and V.~T. Paschos.
\newblock {Weighted Coloring on Planar, Bipartite and Split Graphs: Complexity
  and Improved Approximation}.
\newblock {\em Lecture Notes in Computer Science}, 3341:896--907, 2004.

\bibitem{weighted_coloring_expensive}
M.~Demange, D.~de~Werra, J.~Monnot, and V.~T. Paschos.
\newblock {Weighted node coloring: when stable sets are expensive}.
\newblock {\em Lecture Notes in Computer Science}, 2573:114--125, 2002.

\bibitem{art95}
R.~Diestel.
\newblock {\em {Graph Theory}}.
\newblock Springer-Verlag, 2005.

\bibitem{Janez3}
T.~Dobravec, B.~Robi\v{c}, and J.~\v{Z}erovnik.
\newblock {Permutation routing in double-loop networks: design and empirical
  evaluation}.
\newblock {\em Journal of Systems Architecture}, 48:387--402, 2003.

\bibitem{Janez2}
T.~Dobravec, J.~\v{Z}erovnik, and B.~Robi\v{c}.
\newblock An optimal message routing algorithm for circulant networks.
\newblock {\em Journal of Systems Architecture}, 52:298--306, 2006.

\bibitem{deltaport}
P.~Fraigniaud and E.~Lazard.
\newblock Methods and problems of communication in usual networks.
\newblock {\em Discrete Applied Mathematics}, 53:79--134, 1994.

\bibitem{survey_routing98}
M.~D. Grammatikakis, M.~K. D.~F.~Hsu, and J.~Sibeyn.
\newblock {Packet Routing in Fixed-Connection Networks: a Survey}.
\newblock {\em Journal of Parallel and Distributed Processing}, 54(2):77--132,
  1998.

\bibitem{havill01online}
J.~T. Havill.
\newblock {Online Packet Routing on Linear Arrays and Rings}.
\newblock {\em Lecture Notes in Computer Science}, 2076:773--784, 2001.

\bibitem{big_foot}
F.~Hwang, T.~Lin, and R.~Jan.
\newblock {A Permutation Routing Algorithm for Double Loop Network}.
\newblock {\em Parallel Processing Letters}, 7(3):259--265, 1997.

\bibitem{low_hyper}
F.~Hwang, Y.~Yao, and B.~Dasgupta.
\newblock Some permutation routing algorithms for low-dimensional hypercubes.
\newblock {\em Theoretical Computer Science}, 270:111--124, 2002.

\bibitem{CCC}
G.~E. Jan and M.-B. Lin.
\newblock Concentration, load balancing, partial permutation routing, and
  superconcentration on cube-connected cycles parallel computers.
\newblock {\em Journal of Parallel and Distributed Compututing}, 65:1471--1482,
  2005.

\bibitem{KKT91}
C.~Kaklamanis, D.~Krizanc, and T.~Tsantilas.
\newblock {Tight bounds for oblivious routing in the hypercube}.
\newblock {\em Theory of Computing Systems}, 24(1):223--232, 1991.

\bibitem{init_prot}
D.~Karimou and J.~F. Myoupo.
\newblock {An Application of an Initialization Protocol to Permutation Routing
  in a Single-Hop Mobile Ad Hoc Networks}.
\newblock {\em The Journal of Supercomputing}, 31:215--226, 2005.

\bibitem{kesselmanNPhard}
A.~Kesselman and K.~Kogan.
\newblock {Non-Preemptive Scheduling of Optical Switches}.
\newblock In {\em Proceedings of IEEE GLOBECOM}, pages 1840--1844, 2004.

\bibitem{klavzar}
S.~Klav\v{z}ar, A.~Vesel, and P.~\v{Z}igert.
\newblock On resonance graphs of catacondensed hexagonal graphs: structure,
  coding, and hamiltonian path algorithm.
\newblock {\em MATCH Communications in Mathematical and in Computer Chemistry},
  49:99--116, 2003.

\bibitem{comp_graph}
E.~Kranakis, H.~Sing, and J.~Urrutia.
\newblock {Compas Routing in Geometric Graphs}.
\newblock In {\em 11th Canadian Conference of Computational Geometry}, pages
  51--54, 1999.

\bibitem{KNR91}
M.~Kunde, R.~Niedermeier, and P.~Rossmanith.
\newblock {Faster sorting and routing on grids with diagonals}.
\newblock {\em Lecture Notes in Computer Science}, 775:225--236, 1994.

\bibitem{Leighton_fast_95}
F.~Leighton, B.~M. Maggs, and A.~W. Richa.
\newblock {Fast Algorithms for Finding O(Congestion + Dilation) Packet Routing
  Schedules}.
\newblock In {\em 28th Annual Hawaii International Conference on System
  Sciences}, pages 555--563, 1995.

\bibitem{Leighton_C_D_94}
F.~T. Leighton, B.~M. Maggs, and S.~B. Rao.
\newblock {Packet Routing and Job-Shop Scheduling in O(congestion + dilation)
  Steps}.
\newblock {\em Combinatorica}, 14(2):167--186, 1994.

\bibitem{duplex}
T.~Leighton.
\newblock {\em {Introduction to Parallel Algorithms and Architectures:
  Arrays-Trees-Hypercubes}}.
\newblock Morgan-Kaufman, San Mateo, California, 1992.

\bibitem{LMR88}
T.~Leighton, B.~Maggs, and S.~Rao.
\newblock {Universal packet routing algorithms}.
\newblock In {\em 29th Annual Symposium on Foundations of Computer Science},
  pages 256--269, 1988.

\bibitem{constant95}
T.~Leighton, F.~Makedon, and I.~G. Tollis.
\newblock {A $2n-2$ Step Algorithm for Routing in an $n \times n$ Array with
  Constant-Size Queues}.
\newblock {\em Algorithmica}, 14:291--304, 1995.

\bibitem{optical}
W.~Liang and X.~Shen.
\newblock {Permutation Routing in All-Optical Product Networks}.
\newblock {\em IEEE Transactions on Circuits and Systems}, 49(4):533--538,
  2002.

\bibitem{Mag06}
B.~Maggs.
\newblock {A Survey of Congestion+ Dilation Results for Packet Scheduling}.
\newblock In {\em Proceedings of the 40th Annual Conference on Information
  Sciences and Systems}, volume~22, pages 1505--1510, 2006.

\bibitem{MHV+97}
B.~Maggs, F.~Mayer auf~der Heide, B.~Vocking, and M.~Westermann.
\newblock {Exploiting locality for data management in systems of limited
  bandwidth}.
\newblock In {\em Proceedings of the 38th Annual IEEE Symposium on Foundations
  of Computer Science}, pages 284--293, 1997.

\bibitem{Makedon_opt_93}
F.~Makedon and A.~Symvonis.
\newblock {Optimal algorithms for the many-to-one routing problem on
  2-dimensional meshes}.
\newblock {\em Microprocessors and Microsystems}, 17:361--367, 1993.

\bibitem{Stoj1}
F.~G. Nocetti, I.~Stojmenovi\'{c}, and J.~Zhang.
\newblock {Addressing and Routing in Hexagonal Networks with Applications for
  Tracking Mobile Users and Connection Rerouting in Cellular Networks}.
\newblock {\em IEEE Transactions on Parallel and Distributed Systems},
  13(9):963--971, 2002.

\bibitem{Universal97}
R.~Ostrovsky and Y.~Rabani.
\newblock {Universal O(congestion + dilation + $\log^{1+\varepsilon}N$) Local
  Control Packet Switching Algorithms}.
\newblock In {\em 29th Annual ACM Symposium on the Theory of Computing}, pages
  644--653, New York, 1997.

\bibitem{pietracaprina01optimal}
A.~Pietracaprina and G.~Pucci.
\newblock {Optimal Many-to-One Routing on the Mesh with Constant Queues}.
\newblock {\em Lecture Notes in Computer Science}, 2150:645--649, 2001.

\bibitem{Rac02}
H.~R\"acke.
\newblock {Minimizing congestion in general networks}.
\newblock In {\em Proceedings of the 43rd Annual IEEE Symposium on Foundations
  of Computer Science}, pages 43--52, 2002.

\bibitem{rajasekaran}
S.~Rajasekaran and R.~Overholt.
\newblock Constant queue routing on a mesh.
\newblock {\em Journal of Parallel and Distributed Computing}, 15:160--166,
  1992.

\bibitem{Janez1}
B.~Robi\v{c} and J.~\v{Z}erovnik.
\newblock {Minimum 2-terminal routing in 2-jump circulant graphs}.
\newblock {\em Computers and Artificial Intelligence}, 19(1):37--46, 2000.

\bibitem{INOC07}
I.~Sau and J.~\v{Z}erovnik.
\newblock Optimal permutation routing on mesh networks.
\newblock In {\em Proc. of International Network Optimization Conference},
  Belgium, April 2007.
\newblock 6 pages.

\bibitem{opt_hexnet_preprint}
I.~Sau and J.~\v{Z}erovnik.
\newblock An optimal permutation routing algorithm on full-duplex hexagonal
  networks.
\newblock {\em Discrete Mathematics {\&} Theoretical Computer Science},
  10(3):49--62, 2008.

\bibitem{Sch98}
C.~Scheideler.
\newblock {\em {Universal Routing Strategies for Interconnection Networks}}.
\newblock Springer, 1998.

\bibitem{Sybeyn97}
J.~Sibeyn.
\newblock {Routing on Triangles, Tori and Honeycombs}.
\newblock {\em International Journal of Fundations of Computer Science},
  8(3):269--287, 1997.

\bibitem{Sibeyn_1k}
J.~Sibeyn and M.~Kaufman.
\newblock Deterministic 1-k routing on meshes (with applications to worm-hole
  routing).
\newblock {\em Lecture Notes in Computer Science}, 775:237--248, 1994.

\bibitem{sibeyn97deterministic}
J.~F. Sibeyn, B.~S. Chlebus, and M.~Kaufmann.
\newblock {Deterministic Permutation Routing on Meshes}.
\newblock {\em Journal of Algorithms}, 22(1):111--141, 1997.

\bibitem{258658}
A.~Srinivasan and C.-P. Teo.
\newblock A constant-factor approximation algorithm for packet routing, and
  balancing local vs. global criteria.
\newblock In {\em Proceedings of the 20th Annual ACM Symposium on Theory of
  Computing}, pages 636--643, 1997.

\bibitem{honeycomb}
I.~Stojmenovi\'{c}.
\newblock {Honeycomb Networks: Topological Properties and Communication
  Algorithms}.
\newblock {\em IEEE Transactions on Parallel and Distributed Systems},
  8(10):1036--1042, 1997.

\bibitem{Suel_94}
T.~Suel.
\newblock {Routing and Sorting on Meshes with Row and Column Buses}.
\newblock In {\em Proceedings of the 8th International Symposium on Parallel
  Processing}, pages 411--417, 1994.

\bibitem{chemistry}
R.~To\v{s}i\'{c}, D.~Masulovi\'{c}, I.~Stojmenovi\'{c}, J.~Brunvoll, B.~Cyvin,
  and S.~Cyvin.
\newblock {Enumeration of Polyhex Hydrocarbons up to h=17}.
\newblock {\em Journal of Chemical Information and Computer Sciences},
  35:181--187, 1995.

\bibitem{trobec}
R.~Trobec.
\newblock Two-dimensional regular $d$-meshes.
\newblock {\em Parallel Computing}, 26:1945--1953, 2000.

\bibitem{ValBre81}
L.~Valiant and G.~Brebner.
\newblock {Universal schemes for parallel communication}.
\newblock In {\em Proceedings of the 13th Annual ACM Symposium on Theory of
  Computing}, pages 263--277, 1981.

\bibitem{williams1979gfn}
R.~Williams.
\newblock {\em {The Geometrical Foundation of Natural Structure: A Source Book
  of Design}}.
\newblock Dover Publications, 1979.

\end{thebibliography}

\end{document}